\newtheorem{theorem}{Theorem}
\newtheorem{corollary}[theorem]{Corollary}
\newtheorem{lemma}[theorem]{Lemma}
\newtheorem{problem}[theorem]{Problem}
\theoremstyle{definition}
\newtheorem{remark}[theorem]{Remark}
\newcommand{\integers}{\ensuremath{\mathds{Z}}} % the Integers
\newcommand{\comment}[1]{}
\newcommand{\pivot}[1]{\ensuremath{\pi_{{#1}}}} % for a pivot in a matrix
\newcommand{\tcz}{TC\textsuperscript{0}}
\title{Low-complexity computations for nilpotent subgroup problems}
\author{Jeremy Macdonald\footnote{Concordia University, jeremy.macdonald@concordia.ca}, Alexei Miasnikov\footnote{Stevens Institute of Technology,  Alexei.Miasnikov@stevens.edu}, Denis Ovchinnikov\footnote{Stevens Institute of Technology, dovchinn@stevens.edu}
%\\Version 1.0
}
\begin{document}
\maketitle{}
\maketitle{}
\begin{abstract}
We solve the following algorithmic problems using $\tcz{}$ circuits, or in logspace and quasilinear time, 
uniformly
in the class of nilpotent groups with 
bounded nilpotency class and rank: 
subgroup conjugacy, computing the normalizer and isolator 
of a subgroup, coset intersection, and 
computing the torsion subgroup. 
Additionally, if any input words are provided 
in compressed form as straight-line programs or in Mal'cev 
coordinates the algorithms run in quartic time.
\end{abstract}

\tableofcontents

\section{Introduction}
This is the third paper in a series on complexity of algorithmic problems in finitely generated nilpotent groups. In the first paper \cite{MMNV2014}, we showed that the basic algorithmic problems (normal forms, conjugacy of elements, subgroup membership, centralizers, presentation of subgroups, etc.) can be solved by algorithms running in logarithmic space and quasilinear time. Further, 
if the problems are considered in `compressed' form with 
each input word provided as a straight-line program, 
we showed that the problems are solvable in polynomial time. The second paper \cite{MW17} pushed the complexity of these problems lower, showing that they 
may be solved by \tcz{} circuits.
Here we expand the list of algorithmic problems for 
nilpotent groups which may be solved in these low 
complexity conditions to include several fundamental problems concerning subgroups.

Note that in group theory algorithmic problems for subgroups of groups are usually much harder then the basic algorithmic problems mentioned above. Nevertheless, we present here algorithms for deciding the conjugacy of two subgroups of a finitely generated nilpotent group $G$, finding the normalizer and the isolator of a given subgroup of $G$, finding the torsion subgroup $T(G)$ of $G$, and finding the intersection of two cosets of subgroups of $G$, all of which may be implemented by \tcz{} circuits, or run  in logarithmic space and quasilinear time on a (multi-tape) Turing machine.
Furthermore, the compressed versions of these problems are solvable in polynomial (specifically, quartic) time.  All of the algorithms work uniformly over finitely generated nilpotent groups (i.e. the group may be included in the algorithm's input), however the complexity bounds depend on the nilpotency class and the rank (number of generators) of the presentation. When both are bounded, we solve all the problems uniformly in \tcz{} or logspace and quasilinear time.

Algorithmic problems in nilpotent groups have been studied for a long time.  On the one hand, it was shown that many of them are decidable and many sophisticated decision algorithms were designed (see, for example,  the pioneering paper \cite{KRRRC69} by Kargapolov et al. published in 1969 and the books \cite{Sim94} and \cite{HEO05} for more  recent techniques);  on the other hand,  there are some which have been known to be undecidable for some time (for instance,  decidability of equations \cite{Rom77}). 
%We refer to the paper \cite{MMNV2014} for more historical remarks.
Recent work by a variety of authors has introduced a host 
of decidable/undecidable problems.  New undecidable 
problems, including the knapsack problem, commutator and 
rectract problems are described in \cite{Loh15}, 
\cite{KLZ15}, \cite{MT16}, and \cite{Rom16}, while  positive 
decidability results for direct product decompositions and 
equations in the Heisenberg group are described in 
\cite{BMO16} and \cite{DLS15}. Decidability and 
undecidability results for equations over random 
nilpotent groups are also given in \cite{GMO16_Properties} 
and \cite{GMO16_Random}.

 However, it seems that this paper together with \cite{MMNV2014} and \cite{MW17} present the first thorough attempt to study the complexity of the problems, beyond the decidable/undecidable dichotomy.  In fact, it seems this is  currently the only known large class of non-abelian groups where the major algorithmic problems are shown to have low space and time complexity. 
  Another large class of such groups is, perhaps, the class of finitely generated free groups given by the standard presentations.  Even there,  if the free groups are given by arbitrary finite presentations the complexity of the algorithmic problems is still mostly unknown.

%To conclude our introductory remarks we would like to state %several open problems which we think are interesting from the %complexity viewpoint. 
%The isomorphism problem is one of the fundamental algorithmic problems in nilpotent groups that we did not mention yet. It is decidable due to the  famous result of Grunewald and Segal \cite{GS80}. Nevertheless, not much is known about its complexity. 

We have not yet mentioned one of the fundamental algorithmic 
problems in nilpotent groups: the isomorphism problem. 
It is decidable due to the  famous result of Grunewald and Segal \cite{GS80}. Nevertheless, not much is known about its complexity. 
\begin{problem}
Is the isomorphism problem in finitely generated nilpotent groups  decidable in polynomial time? Exponential time?
\end{problem}

%Another interesting question is to determine the precise complexity of the major algorithmic problems in nilpotent groups. By this we mean to find a low complexity classes where these problems are complete. To this end we pose the following question.

%\begin{problem}
%Are the basic algorithmic problems in finitely generated nilpotent groups in the complexity class $TC^0$? 
%\end{problem}
 %If true that would represent a principle breakthrough in our knowledge of algorithmic problems in nilpotent groups, by precisely pin-pointing their algorithmic complexity.

\section{Background}
This section describes, summarizing from \cite{MMNV2014} and \cite{MW17}, 
how we will represent finitely generated nilpotent groups  (\S \ref{Sec:Malcev}) and their subgroups (\S \ref{Subsec:subgroups}), and gives black-box descriptions 
of several 
algorithms that we will be using as subroutines 
(\S \ref{Sec:FundamentalAlgorithms}).  We also give a brief 
introduction to the \tcz{} circuit model of computation, 
logspace computations, and 
the use of compressed words in algorithmic problems 
over groups (\S \ref{Sec:logspace}).

\subsection{Nilpotent presentations}
\label{Sec:Malcev}
Let $G$ be a finitely generated nilpotent group of nilpotency 
class $c$.  Then $G$ has lower central series 
\[
G=\Gamma_{1}\rhd \Gamma_{2}\rhd \ldots \rhd \Gamma_{c}\rhd \Gamma_{c+1}=1
\]
with $\Gamma_{i+1}=[G,\Gamma_{i}]$ for $i>1$. 
From this series we derive a presentation for $G$, 
as follows.
%Associated 
%to the lower central series is a particular presentation 
%of $G$, called a \emph{lower-central Mal'cev presentation} which we use throughout this work.  We derive this 
%presentation and some related terminology below.

Each 
$\Gamma_{i}/\Gamma_{i+1}$ is a finitely generated abelian 
group.  We select and fix a finite generating set 
$a_{s_{i-1}+1}\Gamma_{i+1},\ldots,a_{s_{i}}\Gamma_{i+1}$ for  $\Gamma_{i}/\Gamma_{i+1}$ and put 
\[
A = \{a_{1},a_{2},\ldots,a_{m}\}.
\]
For each $j=1,\ldots,m$, if $s_{i-1}+1\leq j\leq s_{i}$,  
we denote by $e_{j}$ the order 
of $a_{j}\Gamma_{i+1}$ in $\Gamma_{i}/\Gamma_{i+1}$, using 
$e_{j}=\infty$ when the order is infinite.  Denote 
\[
\mathcal{T}=\{i \,|\, e_{i}<\infty\}.
\]
Provided that each generating set above  is chosen to correspond to a 
primary or invariant factor decomposition of 
$\Gamma_{i}/\Gamma_{i+1}$, every element $g\in G$ may be 
written uniquely in \emph{Mal'cev normal form} as 
\begin{equation}\label{MalcevNF}
g=a_{1}^{\alpha_{1}} a_{2}^{\alpha_{2}}\cdots a_{m}^{\alpha_{m}}
\end{equation}
where $\alpha_{i}\in\integers$ and if $i\in\mathcal{T}$ 
then $0\leq \alpha_{i}<e_{i}$.  The set $A$ is called a \emph{Mal'cev 
basis} of $G$ and the integers 
$(\alpha_{1},\ldots,\alpha_{m})$ are  the \emph{Mal'cev 
coordinates} of $g$.

For each $i=1,\ldots,m$, denote $G_{i}=\langle a_{i},\ldots,
a_{m}\rangle$.  An essential fact, which follows from 
the definition of the lower central series, is that 
for any $i<j$, 
\[
[a_{i},a_{j}]\in G_{\ell}
\]
for  some $\ell>j$.
From this it follows that relations of the form 
\begin{align}
a_{j}a_{i} & =   a_{i} a_{j}a_{\ell}^{\alpha_{ij\ell}}
a_{\ell+1}^{\alpha_{ij(\ell+1)}}
\cdots a_{m}^{\alpha_{ijm}} \label{relator1}\\
a_{j}^{-1}a_{i} & =  a_{i} a_{j}^{-1} a_{\ell}^{\beta_{ij\ell}}a_{\ell+1}^{\beta_{ij(\ell+1)}}\cdots a_{m}^{\beta_{ijm}}, \label{relator2}
\end{align}
with $\ell >j$, 
hold in $G$.  In addition, for each $i\in \mathcal{T}$ 
there is a relation of the form 
\begin{equation}\label{relator3}
a_{i}^{e_{i}} = a_{\ell}^{\mu_{i\ell}}a_{\ell+1}^{\mu_{i(\ell+1)}}\cdots a_{m}^{\mu_{im}}
\end{equation}
where $\ell>j$.  
The set  $\{a_{1},\ldots,a_{m}\}$, viewed as an abstract set 
of symbols, together with relators (\ref{relator1})--(\ref{relator3}) then 
form a presentation for $G$ called a 
\emph{nilpotent presentation}.  
In fact, any presentation of this form 
defines a nilpotent group.  Such a presentation 
is called \emph{consistent} if the order of each 
$a_{i}$ modulo $\langle a_{i+1},\ldots,a_{m}\rangle$ 
is precisely $e_{i}$. Note that $e_{i}=1$ is 
permitted in a nilpotent presentation.

For low-complexity algorithms, an essential 
property of nilpotent presentations is the following
(see \cite{MMNV2014} Thm. 2.3 and Lem. 2.5): if $w$ is any word over $A^{\pm}$, 
then the length of the Mal'cev normal form (\ref{MalcevNF}) 
of the element $g$ corresponding to $w$ in $G$ is bounded by a polynomial function of the length 
of $w$, with the degree of the polynomial depending on the nilpotency class 
$c$ and number of generators $r$ of $G$.  This fact plays a crucial role  in solving 
efficiently the 
fundamental algorithmic problems in finitely generated 
nilpotent groups.  

%In the proofs of these fundamental algorithmic results, one can replace the lower central series with any  
%central series $G_{1}\rhd \ldots \rhd G_{c'+1}$ for which 
%\begin{equation}\label{Eqn:series}
%[G_{i},G_{j}]\leq G_{i+j}
%\end{equation}
%(see \cite{MMNV2014} Rem. 2.4). In light of this, 
%we extend the definition of `lower-central Mal'cev basis' 
%to include any Mal'cev basis produced, as described above, 
%starting from such a series.

\subsection{Subgroups}\label{Subsec:subgroups}
All of our results concern subgroups of finitely generated 
nilpotent groups.  For every subgroup $H\leq G$ (all of 
which are, necessarily, finitely generated), one may define a 
unique generating set $(h_{1},\ldots,h_{s})$ called the \emph{full-form sequence} for $H$. The precise definition 
was given in \cite{Sim94} (and is reviewed in \cite{MMNV2014}), 
but we mention here only three facts about $(h_{1},\ldots,h_{s})$ that we will need.

First, let $B$ be the matrix in which row $i$ is the row 
vector consisting of 
the Mal'cev coordinates of $h_{i}$. Then $B$ is  in 
row echelon form and does not contain zero rows.  We denote by $\pi_{i}$ the pivot column 
of row $i$ of $B$.  Since this column corresponds to 
generator $a_{\pi_{i}}$, the Mal'cev normal form 
of $h_{i}$ begins with $a_{\pi_{i}}$, so 
$h_{i}\in G_{\pi_{i}}=\langle a_{\pi_{i}},\ldots,a_{m}\rangle$. 

Second, the number  of generators $s$ is bounded by the 
length $m$ of the Mal'cev basis.  Third, every element $h \in H$ can be uniquely presented in the form 
$$h = h_{1}^{\beta_{1}}\cdots h_{s}^{\beta_{s}},$$
where $\beta_{i}\in\integers$ and $0\leq \beta_{i}< e_{\pi_i}$ if $\pi_i\in\mathcal{T}$. Hence 

\[
H=\{h_{1}^{\beta_{1}}\cdots h_{s}^{\beta_{s}}\,|\, 
\beta_{i}\in\integers \;\mbox{and}\; 0\leq \beta_{i}< e_{\pi_i} \;
\mbox{if}\; \pi_i\in\mathcal{T}\}.
\]

\subsection{Logspace, \tcz, and compressed words}
\label{Sec:logspace}
Let $\mathcal{A}$ be a finite language.  
We are interested in both decision and search problems, and we 
may regard each such problem 
 as a function  $f:\mathcal{A}^{*}\rightarrow\mathcal{A}^{*}$.  The set $\mathcal{A}$ 
consists of a set of symbols, say $x_{1},\ldots,x_{n}$, which 
denote group generators, and a few extra symbols used to separate 
different parts of the input (commas to separate relators etc.).  
We will be computing $f(x)$ using \emph{logarithmic space} or using \emph{\tcz circuits}.  We recall both 
of these notions below.

\paragraph{Logspace.}
A \emph{$c$-logspace transducer}, where $c>0$ is a constant, is a multi-tape
Turing machine consisting of the following tapes: an `input' tape 
which is read-only, a constant number of read-write `work' tapes, and a 
write-only `output` tape.  For any input of length $L$, 
which is provided on the input tape, the amount of space the transducer 
is allowed to use on each work tape is $c\log(L)$.  The output of 
the machine is the content of the output tape.   A function $f$ 
is said to be \emph{logspace computable}, or more casually 
the associated 
problem is solvable in logarithmic space, if there exists a constant $c$ 
and a 
$c$-logspace transducer that produces $f(x)$ on the output tape 
for any input $x$ appearing on the input tape.

Though computation on a $c$-logspace transducer 
puts a bound only on space 
resources, a polynomial time bound of $O(L^{c})$ is 
forced by the fact 
that the machine may not enter the same configuration twice 
(otherwise it will loop infinitely) and the number of 
configurations is bounded by a polynomial function of 
the input length.  The degree $c$ may be very 
high, and for this reason it is also desirable to show directly 
that our 
algorithms run in low-degree polynomial time, in 
particular quasilinear time
(i.e. $O(L\log^{k}(L))$ for some constant $k$).

Most of our algorithms invoke other logspace algorithms 
as subroutines, and as such we need to compute compositions $f\circ g$ of  logspace computable functions.  A standard argument 
shows that $f\circ g$ is again logspace computable, but  
in computing $(f\circ g)(x)$ in this way, each symbol of $g(x)$ is recomputed each time it is needed in computation of $f$,  which may give a significant increase in time complexity.  However, if the output $g(x)$ is always of size 
$O(\log(L))$, one may simply compute $g(x)$ first, store the output on the work tape, 
and then proceed to compute $f(g(x))$.  This is the case in all of our algorithms, so in this case the time complexity of $g$ is simply added to the overall 
time complexity.

\paragraph{\tcz{} circuits.} 
A \emph{\tcz{} circuit} with 
$n$ inputs 
is a boolean circuit of constant depth using NOT gates 
and unbounded fan-in AND, OR, and MAJORITY gates, 
such that the total number of gates 
is bounded by a polynomial function of $n$.  
A MAJORITY gate outputs 1 when more than half of its 
inputs are 1. A function $f(x)$ is \emph{\tcz{}-computable}
(more casually, `an algorithm is in \tcz{}')
if for each $n$ there 
is a \tcz{} circuit $F_{n}$ with $n$ inputs which produces 
$f(x)$ on every input $x$ of length $n$.  Essential 
for our purposes is the fact that the composition 
of two \tcz{}-computable functions is again 
\tcz{}-computable.

Since 
this definition of being computable 
only asserts that such a family
$\{F_{n}\}_{n=1}^{\infty}$ of  circuits exists, 
one normally imposes in addition a \emph{uniformity} condition 
stating that each $F_{n}$ is constructible in some 
sense.  We will only be concerned here with 
standard notion of DLOGTIME uniformity, which asserts 
that there is a random-access Turing machine which 
decides in logarithmic time whether in circuit $F_{n}$ 
the output of gate number $i$ is connected to the input 
of gate $j$, and determines the types of gates 
$i$ and $j$.  We refer the reader to \cite{Vol99} for 
further details on \tcz{}.

To put our results in context, we remind the reader of the following inclusions of complexity classes:
\[
\mathrm{TC}^{0} \subseteq \mathrm{LOGSPACE} \subseteq \mathrm{P} \subseteq \mathrm{NP}.
\]
It is not known whether any of these inclusions is 
strict.  Though every \tcz{}-computable function 
is also logspace-computable and polynomial-time computable,
our algorithm descriptions also give direct proofs 
of membership in these classes.

\paragraph{Compressed words.}
We are also interested in algorithms that run efficiently 
when the input is given in compressed format. 
The use of Mal'cev coordinates provides a natural 
compression 
scheme for elements of $G$:  each $g\in G$ may be encoded 
by a tuple of integers (its Mal'cev coordinates) written in binary.  Notice that if the size $m$ of the Mal'cev basis 
is bounded, a normal form of length $n$ may be encoded by $O(\log n)$ bits.  Since every finitely generated nilpotent 
group has a Mal'cev basis, it is natural to consider 
algorithmic problems in which 
input words represented in this compact way.  Of course, 
such `compressed problems' are, in terms of computational 
complexity, more difficult than their uncompressed siblings.

Since we will consider uniform algorithms, in which 
a finitely generated nilpotent group $G$ is given by an 
\emph{arbitrary} presentation as part of the input, we also 
consider two other compression schemes which do not depend on a  the specification of a Mal'cev basis.  First, we 
may simply allow exponents to be encoded in binary.  In this 
scheme, a word is encoded as a product of tuples $(g,m)$, 
representing $g^{m}$, 
where $g$ is a group generator or, recursively, a word of this form, and $m$ 
is a binary integer.  For example, $(x^4 y^2)^8 x^{-6}$ is 
encoded as $(((x,0100)(y,0010)),1000)(x^{-1},0110)$.

Second, we consider straight-line programs, that is, 
context-free grammars that generate exactly one string. 
Formally, a 
\emph{straight-line program} or \emph{compressed word}
over an alphabet $\mathcal{A}$ consists of a set 
$\{A_{1},A_{2},\ldots,A_{n}\}$ called the \emph{non-terminal symbols}
and for each non-terminal symbol $A_{i}$ a 
\emph{production rule} either of the form $A_{i}\rightarrow A_{j} A_{k}$ 
with $j,k<i$, or of the form $A_{i}\rightarrow a$ where 
$a\in \mathcal{A}\cup \{\epsilon\}$ with $\epsilon$ denoting 
the empty word. The non-terminal $A_{n}$ is termed 
the \emph{root}, and one `expands' the compressed word 
by starting with the one-character word  $A_{n}$ and successively 
replacing any non-terminal with the right side of its 
production rule until only symbols from $\mathcal{A}$ 
remain. 
The number $n$ of non-terminal symbols is the size 
of the program.  Compression arises since a program 
of size $n$ may expand to a word of length $2^{n-1}$.
We refer the reader to the survey article \cite{Loh12} 
and the monograph \cite{Loh14} for further information 
on compressed words, or to the introduction of 
\cite{MMNV2014} for some brief remarks.

\subsection{Fundamental algorithms for nilpotent groups}
\label{Sec:FundamentalAlgorithms}

Throughout this paper, we make extensive use of algorithms 
described in \cite{MMNV2014} and \cite{MW17}.  
We give below a summary of 
some of the most heavily-used ones, and we will use the names 
listed here, in \textbf{bold text}, to refer to their use. 

\begin{itemize}
\item \textbf{Full-form Sequence}: Given 
$H\leq G$, compute the full-form generating sequence 
for $H$. %Thm 3.4
\item \textbf{Membership}: Given 
$g\in G$ and $H\leq G$, determine if $g\in H$ and if so, 
compute the unique expression $g=h_{1}^{\alpha_{1}}\cdots h_{s}^{\alpha_{s}}$ where $(h_{1},\ldots,h_{s})$ is the 
full-form sequence for $H$.%Thm 3.8
\item \textbf{Subgroup Presentation:} Given 
$H\leq G$, compute a consistent nilpotent presentation 
for $H$.%Thm 3.11
\item \textbf{Conjugacy}: 
Given $g,h\in G$, produce $x\in G$ such that $g^x=h$ 
or determine that no such $x$ exists. %Thm 4.6
\item \textbf{Centralizer}: Given 
$g\in G$, compute a generating set for the centralizer of 
$g$ in $G$.%Thm 4.4
%\item \textbf{Subgroup Presentation Algorithm 
%(Thm. 3.11)}: Given $H\leq G$, compute a consistent nilpotent 
%presentation for $H$.
\item \textbf{Kernel}: Given 
$K\leq G$ and $\phi: K\rightarrow G_{1}$, produce a 
generating set for the kernel of $\phi$. %Thm 4.1
\item \textbf{Preimage}: Given 
$K\leq G$, $\phi: K\rightarrow G_{1}$, and $h\in G_{1}$
guaranteed to be in $\phi(K)$, produce $k\in K$ such 
that $\phi(k)=h$. %Thm 4.1
\end{itemize}

We will need some further details regarding the input/output of 
these algorithms as well as their complexity.

\emph{Input.} 
In each algorithm, we fix in advance 
two integers $c$ and $r$.  The ambient nilpotent groups 
$G$ and $G_{1}$ are part of the input (thus the algorithms 
are `uniform') 
but must be of nilpotency class at most $c$ and be presented 
using at most $r$ generators for the complexity bounds given below 
to be valid.
%and are both guaranteed to be given by 
%a lower-central Mal'cev presentation using at most $r$ 
%generators and to be of nilpotency class at most $c$. 
Group 
elements are given as words over the generating set(s), 
subgroups are specified by finite 
generating sets, and $\phi$ is given by listing the elements 
$\phi(k)$ for each given generator $k$ of $K$.
The length $L$ of the input is the sum of the lengths of 
all relators in $G$ and $G_{1}$ plus the lengths of all 
input words.  

%We allow 
%binary numbers to be used to 
%encode any exponents (in relators or input words), 
%though in this case we count the full word length (i.e. 
%the binary number itself, not its bit-count) towards 
%the input length.  See the section `Compressed Inputs' 
%below for how the complexity changes if the bit-count 
%is used instead.

\emph{Output.} Each output word is given as a word over the 
original generating set except possibly in \textbf{Full-form 
sequence} and \textbf{Membership}.  In these cases, 
the algorithm converts to a nilpotent presentation of $G$, 
if one is not already provided, and provides the output words 
in the new generators (the isomorphism may also be provided, 
see Lemma \ref{Lem:WordConversion} below). 
In \textbf{Centralizer} and \textbf{Kernel}, 
if the original 
presentation of $G$ is already a nilpotent presentation, 
one may assume that the subgroup generating set in 
the output 
is the full-form sequence. 

In every case, the total length of each output word is bounded 
by a polynomial function of $L$ and the number of 
output words is bounded by a constant. Optionally, the output words 
may be given by their Mal'cev coordinates.

\emph{Complexity.} 
Each algorithm may be implemented on a logspace transducer, and if so runs in time quasilinear in $L$.  The proofs 
are given in \cite{MMNV2014}.  Alternatively, each 
problem may be solved using \tcz{} circuits, as proved  
in \cite{MW17}.

\emph{Compressed inputs.} Each algorithm may also be run 
`with compressed inputs'.  In this case, any input word 
(including group relators) may be provided by (binary) 
Mal'cev coordinates, words with binary exponents, 
or straight-line programs, as described in \S 
\ref{Sec:logspace}. We will measure 
the size of the input in terms of the number $n$ 
of input words and the maximum size $M$ of any single 
input word (in number of bits or 
number of non-terminal symbols).
The space 
complexity of each algorithm is then $O(M)$ (it 
does not depend on $n$) and the time 
complexity is $O(nM^{3})$.  All output is provided in 
the corresponding  
compressed format.  Although each input word, in its expanded 
form, may have length $O(2^{M})$, the polynomial bound 
for the length of output words implies that each output 
word, in expanded form, has length $O(2^{dM})$ where 
$d$ is the degree of the aformentioned polynomial bound. 
Since $d$ is constant, the compressed size of each output 
remains $O(\log(2^{dM}))=O(M)$.

\begin{remark}
We place no restriction on the number $n$ of input words.  
In all of the algorithms, any variable-sized set of input words 
(e.g. list of subgroup generators, group relators) will be fed as input 
to the matrix reduction algorithm described in 
Thm. 3.4 of \cite{MMNV2014} and processed in the `piecewise' 
manner described there, one word at a time.  After this, 
sets of words usually only appear as full-form sequences 
for subgroups, the number of which is always bounded 
by a constant.  The value $n$ contributes a linear factor 
to the time complexity of this algorithm (in both uncompressed 
and compressed cases), but does not contribute to the space complexity.
\end{remark}

While neither these algorithms nor the ones we describe 
in this paper require that the input groups $G$ and $G_{1}$ 
be given by a nilpotent
presentation, this form is used internally by all of 
the algorithms.  Converting to such a presentation 
is accomplished as follows.

\begin{lemma}\label{Lem:WordConversion}
Let $c$ and $r$ be fixed integers.  There is an 
algorithm that, given a finitely presented nilpotent 
group $G=\langle X\,|\,R\rangle$ of nilpotency class 
at most $c$ and with $|X|\leq r$, 
a finite set $Y\subset G$, 
and a word $h$ over $X^{\pm}$ guaranteed to be in the 
subgroup $H=\langle Y\rangle$, produces 
\begin{itemize}
\item a consistent nilpotent presentation 
$\langle Y'\,|\,S\rangle$ for $H$, in which binary numbers are 
used to encode exponents in the relators $S$,
\item a map $\phi:Y'\rightarrow (Y^{\pm 1})^{*}$ which 
extends to an isomorphism 
$\langle Y'\,|\,S\rangle\rightarrow H$, and
\item a binary integer 
tuple $h'$ giving the Mal'cev coordinates of $h$ relative 
to $Y'$.  
\end{itemize}
The algorithm runs 
in space logarithmic in the input length $L$ and time quasilinear 
in $L$, or in \tcz{}, and the (expanded) length of each output word is bounded by a polynomial function of $L$. If 
compressed inputs are used (in $R$, $Y$, or $h$), the space requirement is $O(M)$ and the 
time is $O(nM^{3})$, where $n$ is the total number of input 
words and $M$ bounds the size of any single input word.
\end{lemma}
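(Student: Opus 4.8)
The plan is to assemble the three required outputs from the fundamental algorithms of \S\ref{Sec:FundamentalAlgorithms}, so that the only genuinely new ingredient is the back-map $\phi$. As a preliminary step I would convert the arbitrary presentation $\langle X\,|\,R\rangle$ into a nilpotent presentation of $G$ together with a Mal'cev basis $Z=(z_{1},\ldots,z_{m})$: since $G$ has class at most $c$ and at most $r$ generators, the lower central series has boundedly many terms and each $\Gamma_{i}/\Gamma_{i+1}$ is generated by the images of the (boundedly many) basic commutators in $X$, so computing its invariant-factor structure is a bounded-size integer matrix reduction, following \cite{MMNV2014}. This records $m$ as a constant and each $z_{k}$ as a word over $X^{\pm}$, after which normal-form computation rewrites every element of $Y$, and the target word $h$, into Mal'cev coordinates relative to $Z$.

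With $H=\langle Y\rangle$ now described by coordinate vectors, running \textbf{Subgroup Presentation} yields a consistent nilpotent presentation $\langle Y'\,|\,S\rangle$, in which $Y'=(h_{1},\ldots,h_{s})$ is the full-form sequence for $H$ and $S$ consists of relators of the forms (\ref{relator1})--(\ref{relator3}) with exponents encoded in binary; this delivers the first output. Running \textbf{Membership} on the pair $(h,H)$ returns the unique expression $h=h_{1}^{\beta_{1}}\cdots h_{s}^{\beta_{s}}$, and the tuple $h'=(\beta_{1},\ldots,\beta_{s})$ is by definition the vector of Mal'cev coordinates of $h$ with respect to the Mal'cev basis $Y'$ of $H$, which is the third output.

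The remaining output $\phi$ must send each full-form generator $h_{i}$ to a word over $Y^{\pm 1}$, rather than over $Z$ or $X$. The plan is to augment the matrix-reduction procedure underlying \textbf{Full-form Sequence} (Thm.~3.4 of \cite{MMNV2014}): as the coordinate rows of the elements of $Y$ are successively combined, pivot by pivot, to reach full form, I would record alongside each surviving pivot row the product of the elements of $Y$ that realizes the corresponding $h_{i}$ in $H$ -- the multiplicative analogue of accumulating the transformation matrix in Gaussian elimination. Taking $\phi(h_{i})$ to be this recorded word, $\phi$ is by construction the isomorphism $\langle Y'\,|\,S\rangle\rightarrow H$ that identifies each abstract generator with the full-form element it names, because $S$ is a consistent nilpotent presentation of $H$ on exactly those generators.

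I expect the main obstacle to be controlling the length of the words $\phi(h_{i})$ while remaining within the stated complexity. Each reduction step contributes only a bounded number of factors, each a generator power with its exponent written in binary, and the number of steps is polynomial in $L$; collecting to Mal'cev normal form after each pivot stage, and invoking the length estimates recalled in \S\ref{Sec:Malcev}, keeps the intermediate words -- which are genuine products in the non-abelian group $H$ -- from growing beyond polynomial length, so each $\phi(h_{i})$ has expanded length polynomial in $L$ and compressed length $O(\log L)$. The remaining complexity claims are then inherited termwise: the above is a constant-length composition of the subroutines of \S\ref{Sec:FundamentalAlgorithms}, which by the closure properties of \S\ref{Sec:logspace} is again logspace- and \tcz{}-computable, and for compressed inputs each subroutine meets the $O(M)$ space and $O(nM^{3})$ time bounds, so their bounded composition does as well.
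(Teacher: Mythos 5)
Your skeleton matches the paper's for two of the three outputs: the paper likewise converts to a consistent nilpotent presentation of $G$ (Prop.~5.1 of \cite{MMNV2014}, or Lem.~5 of \cite{MW17} for \tcz{}), obtains $\langle Y'\,|\,S\rangle$ from \textbf{Subgroup Presentation} with $Y'$ the full-form sequence, and gets $h'$ from \textbf{Membership}. The gap is in your construction of $\phi$ by transformation-matrix bookkeeping during the reduction. A word over $Y^{\pm 1}$ expressing a full-form generator $h_{i}$ may require a number of distinct factors that grows with $n=|Y|$: already in $G=\integers$ with $Y=\{y_{1},\ldots,y_{n}\}$, the pivot element is $\gcd(y_{1},\ldots,y_{n})$ and a B\'ezout-type product realizing it can involve essentially all $n$ generators. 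So your claim that each $\phi(h_{i})$ has compressed length $O(\log L)$ is false in general (it can be $\Omega(n)$ factors), and in the compressed-input setting the intermediate expressions can occupy $\Theta(nM)$ work space against the claimed $O(M)$. Worse, your proposed remedy --- ``collecting to Mal'cev normal form after each pivot stage'' --- is unavailable for precisely this output: collection rewrites a word over the Mal'cev basis of $G$ (or over $Y'$), which destroys the property that $\phi(h_{i})$ is a word over $Y^{\pm 1}$; the alphabet you must preserve admits no normal-form shortening. Since these intermediate expressions are then not $O(\log L)$-bit objects, the composition discipline the paper sets up at the start of its section on algorithmic problems (store each subroutine's constant-size output in work memory) does not apply, so neither quasilinear time on a logspace transducer nor the $O(M)$ compressed-space bound follows from ``bounded composition of subroutines'' as you assert. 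The expression-recording machinery you appeal to does exist (Cor.~3.9 of \cite{MMNV2014}; Thm.~14 of \cite{MW17} for \tcz{}), and the paper uses it in the base case of the coset-intersection theorem, but only with a \emph{constant} number of generators whose coordinates occupy $O(\log L)$ bits --- exactly the regime your construction leaves.

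The paper sidesteps the issue entirely. After the conversion step, $X\subset X'$ and each element of $X'\setminus X$ has a fixed definition as a commutator in $X$; each $h_{i}\in Y'$ is already stored as a normal form $x_{1}^{\alpha_{1}}\cdots x_{m}^{\alpha_{m}}$, and $\phi(h_{i})$ is obtained simply by substituting the commutator definitions and encoding the $\alpha_{i}$ in binary. This gives every $\phi(h_{i})$ a constant number of factors ($m$ is constant) and size $O(\log L)$ (resp.\ $O(M)$), making the complexity claims immediate. Note that this lands in $(X^{\pm 1})^{*}$ rather than literally in $(Y^{\pm 1})^{*}$, which is all the later applications of the lemma actually use (converting outputs back to the ambient generators); your route would, if its complexity could be controlled, deliver the statement's literal codomain, but at the cost of output words of unbounded factor count. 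To repair your proof, either adopt the substitution trick or explicitly weaken the size claims for $\phi$ and rework the logspace/quasilinear and compressed bounds for that step.
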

\begin{proof}\emph{Algorithm.}
Begin by applying Prop. 5.1 of \cite{MMNV2014} (or Lem. 5 
of \cite{MW17} in the \tcz{} case) to 
compute a consistent nilpotent presentation 
$G=\langle X'\,|\, R'\rangle$.  Here $X\subset X'$,  
the inclusion $X\hookrightarrow X'$ induces an 
isomorphism $\langle X\,|\,R\rangle\simeq \langle X'\,|\, R'\rangle$, and each element of $X'\setminus X$ is a 
commutator in elements of $X$. Use \textbf{Subgroup Presentation} to 
compute a nilpotent presentation 
$\langle Y'\,|\,S\rangle$ for $H$.  
The generating set 
$Y'=\{h_{1},\ldots,h_{s}\}$ is precisely the full-form sequence 
for $H$.  The relators have the form (\ref{relator1})-(\ref{relator3}), and we encode the exponents appearing 
on the right sides in binary.
To obtain $\phi$, note that each element of $Y'$ has 
the form $x_{1}^{\alpha_{1}}\cdots x_{m}^{\alpha_{m}}$, 
where $X'=\{x_{1},\ldots,x_{m}\}$.  We replace each $x_{i}$ 
with its definition as a commutator of elements of $X$ and 
encode the exponents $\alpha_{i}$ using binary numbers.
Finally, use 
\textbf{Subgroup Membership} with input $h$ and 
$\{h_{1},\ldots,h_{s}\}$, which returns an expression 
$h=h_{1}^{\gamma_{1}}\cdots h_{s}^{\gamma_{s}}$, giving 
the Mal'cev coordinates $(\gamma_{1},\ldots,\gamma_{s})$.

\emph{Complexity.} Follows immediately from \cite{MMNV2014} and \cite{MW17}.
Note that $s$ is a constant depending on $c$ and $r$.
\end{proof}

We will often use this lemma in the case $Y=X$ 
to convert from an arbitrary presentation of $G$ to a 
nilpotent presentation.  In this case, 
we may assume the algorithm uses $Y'=X'\supset X$.  We 
convert all input words into their Mal'cev coordinates 
(relative to $X'$) at the same time, and perform 
further computations directly 
on the   
Mal'cev coordinates.
%and afterwards we use the map $\phi$ to 
%return any element 
%$y_{1}^{\alpha_{1}}\cdots y_{m}^{\alpha_{m}}$ to the 
%form $\phi(y_{1})^{\alpha_{1}}\cdots \phi(y_{m})^{\alpha_{m}}$, 
%with the exponents written in binary.

Using binary numbers in the output is necessary in order to obtain quasilinear time, since writing down a word 
in its expanded form takes as many steps as the length 
of the word itself, which in this case is only bounded 
by a polynomial function of $L$. 
%As a simple example, 
%consider coverting the presentation 
%\[
%\langle x_{1},\ldots,x_{r} \,|\, \mbox{$x_{i}^{M-1}=x_{i+1}$ for $i=1,\ldots,r-1$ and $x_{r}^{M-1}=1$} \rangle 
%\]

\section{Algorithmic problems}

Before presenting the algorithms, let us 
make a few remarks regarding their complexity analysis. 
The analysis of most of the algorthims is 
similar, so we present here a general argument 
and fill 
in any additional details in the proof of each algorithm.

First, note that the nilpotency class $c$ and maximum 
number of generators $r$ of the input group(s) are 
constant.  All other constants are expressible in terms 
of $c$ and $r$.

At the beginning of each algorithm, we convert to a 
nilpotent presentation, if necessary, using 
Lemma \ref{Lem:WordConversion}.  We denote the resulting 
Mal'cev basis by 
\[
\{a_{1},\ldots,a_{m}\}.
\] 
Note that 
$m$ is constant.
Word lengths are unchanged during this conversion 
(see Lemma \ref{Lem:WordConversion}). 
We are guaranteed by \cite{MMNV2014} Thm. 2.3 
that a word of length $L$ has a Mal'cev form of length 
polynomial in $L$, hence its coordinates require 
$O(\log L)$ bits to record. 

Our algorithms generally consist of a sequence of 
subroutine calls, using the algorithms described 
in \S\ref{Sec:FundamentalAlgorithms} as well as those 
described in this section, with some minor additional 
processing.  The complexity bounds described in 
\S\ref{Sec:FundamentalAlgorithms} also apply to 
the algorithms we describe in this section, 
as we will see.
In all cases, we prove that 
the total number of subroutine calls and the total 
number of words that must be stored in memory at 
any given time is constant.  Consequently, the entire 
algorithm can, in principle, be expressed as a composition 
of a constant number of functions.  Each such 
function is \tcz{}-computable, hence so is the 
composition.
Note that to `store $x$ in memory' in \tcz{} terms 
means to add a parallel computation branch computing 
$x$.

Though it follows immediately that we have logspace solutions to these problems, we wish to prove that one may in fact run 
the algorithms on a logspace transducer in quasilinear 
time.  To do so, we must 
show that each subroutine may be run directly `in memory' 
on the logspace transducer.  

This is achieved by invoking each subroutine in its 
`compressed' form.  Initially, all input words are 
converted into $O(\log L)$-bit Mal'cev coordinate form.
In this process, any variable-sized set of words 
(subgroup generators or group relators) is reduced 
to a constant-sized set (the full-form sequence).  
This size is bounded by $m$, and we often assume it 
is precisely $m$ for notational convenience.
Each subroutine is then called with a constant number 
of $O(\log L)$-bit words. It will therefore run in space 
$O(\log L)$ and time $O(\log^{3} L)$, and 
produce a constant number of 
$O(\log n)$-bit output words.  

For compressed inputs, the argument is similar.  As 
we observed earlier, the polynomial length bound 
implies that the compressed size of words remains 
$O(M)$ throughout the algorithm. Each subroutine therefore 
has space complexity $O(M)$ and time complexity $O(M^{3})$, so the overall space and time complexities are $O(M)$ 
and $O(n M^3)$.

Finally, let us note that if we have a constant number 
elements $g_{1},\ldots,g_{t}$ in Mal'cev form we can, 
by \cite{MMNV2014} Lem. 2.10, compute the Mal'cev form 
of the product $g_{1}\cdots g_{t}$ within the space and 
time bounds specified above, in both compressed and 
uncompressed cases.  We use this 
without mention to maintain elements in coordinate form.

\subsection{Subgroup conjugacy and normalizers} 

In this section we give an algorithm to determine 
whether or not two subgroups of a nilpotent group are conjugate and if so 
to compute a conjugating element.  A natural by-product of 
this algorithm is the computation of subgroup normalizers.

We begin with a preliminary lemma solving 
the simultaneous conjugacy problem for tuples 
of commuting elements.  
In fact, commutation is not required, but we will 
obtain this stronger result (Theorem 
\ref{conjugation_of_tuples}) as a corollary of the 
more complicated coset intersection algorithm.

\begin{lemma}\label{conjugation_of_commuting_tuples}
Fix positive integers $c$, $r$, and $l$. 
There is an algorithm that, given 
a finitely generated nilpotent group $G=\langle X\,|\,R\rangle$
of nilpotency class at most $c$ with $|X|\leq r$ 
and 
two tuples of elements $(a_1,\ldots,a_l)$ and $(b_1,\ldots,b_l)$ such that $[a_i,a_j]=[b_i,b_j]=1$ for all $1\leq i,j\leq l$, decides if there exists $g\in G$
such that 
\[
a_i^g=b_i 
\]
for all $1\leq i\leq l$.
The algorithm produces $g$ if one exists, returns 
a generating set for the centralizer of $\{b_{1},\ldots,b_{l}\}$, and may be run in space 
logarithmic in the length $L$ of the input and time quasilinear in $L$, or in \tcz{}.  The length of 
each output word is 
bounded by a polynomial function of $L$.
If compressed inputs are used, the algorithm uses space 
$O(M)$ 
and time $O(n M^{3})$, where $n=|R|$ and $M$ bounds the 
encoded size of each input word.
%$L=\sum_{j=1}^{n}|h_{j}|+\sum_{i=1}^{l}(|a_{i}|+|b_{i}|)$.
\end{lemma}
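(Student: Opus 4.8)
The plan is to induct on $l$, reducing the simultaneous problem to a single ordinary conjugacy problem at each step, carried out inside a shrinking chain of centralizer subgroups. First I would record the structural fact that, because any two solutions $g,g'$ satisfy $g^{-1}a_ig=g'^{-1}a_ig'=b_i$ and hence $gg'^{-1}\in C_G(a_i)$ for every $i$, the full solution set is either empty or a single coset $g^{*}\,C_G(b_1,\ldots,b_l)$, where $C_G(b_1,\ldots,b_l)=\bigcap_i C_G(b_i)$. Thus it suffices to produce one conjugator $g^{*}$ together with the centralizer $C_G(b_1,\ldots,b_l)$, and the commutativity hypotheses are exactly what keep the inductive reduction inside ordinary (rather than constrained) conjugacy.

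Concretely, I would first convert $G$ to a nilpotent presentation via Lemma \ref{Lem:WordConversion}, set $H_0=G$ and $g_0=1$, and iterate for $i=1,\ldots,l$. At step $i$ I have a conjugator $g_{i-1}$ with $a_j^{g_{i-1}}=b_j$ for $j<i$ and the subgroup $H_{i-1}=C_G(a_1,\ldots,a_{i-1})$ (obtained by iterated calls to \textbf{Centralizer}). Writing a prospective solution as $g=c\,g_{i-1}$ with $c\in H_{i-1}$, the $i$-th equation becomes $a_i^{\,c}=\tilde b_i$ where $\tilde b_i=b_i^{\,g_{i-1}^{-1}}$. The key observation, using commutativity, is that both $a_i$ and $\tilde b_i$ lie in $H_{i-1}$: indeed $a_i$ commutes with each $a_j$ ($j<i$), and since $a_j=b_j^{\,g_{i-1}^{-1}}$ while $b_i$ commutes with $b_j$, the conjugate $\tilde b_i$ commutes with each $a_j$ as well. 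Hence solving the $i$-th equation is an honest conjugacy problem inside the nilpotent group $H_{i-1}$: I would compute a nilpotent presentation of $H_{i-1}$ with \textbf{Subgroup Presentation}, express $a_i$ and $\tilde b_i$ in its generators using \textbf{Membership}, run \textbf{Conjugacy} there, and translate the resulting $c_i$ back to a word over the generators of $G$ via the isomorphism supplied by \textbf{Subgroup Presentation}. If \textbf{Conjugacy} reports no solution at any step, the whole problem has none; otherwise set $g_i=c_i g_{i-1}$ and $H_i=C_{H_{i-1}}(a_i)$, and after $l$ steps output $g^{*}=g_l$. The centralizer $C_G(b_1,\ldots,b_l)$ I would compute by the analogous iteration $K_0=G$, $K_i=C_{K_{i-1}}(b_i)$ (valid since $b_i\in K_{i-1}$ by commutativity), using \textbf{Centralizer} and \textbf{Subgroup Presentation}.

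For complexity I would invoke the general argument of this section. Since $l$ is a fixed constant, the recursion unrolls into a composition of a constant number of subroutine calls, each applied to a group of nilpotency class at most $c$ and bounded rank (the full-form sequence of any subgroup has length at most the Mal'cev basis size $m$, itself a constant determined by $c$ and $r$). Word lengths remain polynomially bounded throughout, because the polynomial length bounds of the subroutines compose through the constantly many levels; hence every call operates on $O(\log L)$-bit coordinate data in space $O(\log L)$ and time $O(\log^{3}L)$, yielding the stated logspace/quasilinear and \tcz{} bounds, and in the compressed setting space $O(M)$ and time $O(nM^{3})$.

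I expect the main obstacle to be the bookkeeping of the nested subgroup presentations: each level produces a fresh Mal'cev basis for $H_{i-1}$, and one must carefully pass elements downward (via \textbf{Membership}, into the new generators) and conjugators back upward (via the isomorphism) while checking that the membership claims $a_i,\tilde b_i\in H_{i-1}$ hold at every level. Verifying that these claims follow purely from the commutativity hypotheses—and hence that no coset-intersection step is ever required—is the crux that makes this a genuinely elementary predecessor to Theorem \ref{conjugation_of_tuples}.
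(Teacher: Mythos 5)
Your proposal is correct and takes essentially the paper's approach: both peel off one conjugacy equation at a time via \textbf{Conjugacy} and \textbf{Centralizer} and restrict the remaining system to a centralizer subgroup re-presented via Lemma \ref{Lem:WordConversion}, the paper recursing inside $C_G(b_1)$ with the tuples $(a_2^h,\ldots,a_l^h)$ and $(b_2,\ldots,b_l)$, which is the mirror image of your iteration inside $H_{i-1}=C_G(a_1,\ldots,a_{i-1})$ with $\tilde b_i=b_i^{g_{i-1}^{-1}}$. The only substantive difference is that the paper's recursion returns $C_G(b_1,\ldots,b_l)$ for free (the centralizer of $(b_2,\ldots,b_l)$ in $C_G(b_1)$ is exactly it), whereas your centralizer chain in the $a_i$'s forces the separate second pass $K_i=C_{K_{i-1}}(b_i)$ --- harmless for the complexity bounds, and avoidable anyway since $C_G(b_1,\ldots,b_l)=H_l^{\,g^{*}}$.
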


\begin{proof}
\emph{Algorithm.}
If necessary, use \textbf{Lemma \ref{Lem:WordConversion}} to convert 
to a nilpotent presentation.  Next, 
we check conjugacy of $a_{1}$ with $b_{1}$ using 
the \textbf{Conjugacy Algorithm}. If they are not 
conjugate, we may return `No'.  Otherwise, we obtain $h$ 
such that $a_1^h=b_1$ and we 
compute a generating set for $C_{G}(b_{1})$ using the 
\textbf{Centralizer Algorithm}.

If $l>1$, we proceed recursively. Notice that $g$ 
exists if and only if there exists $x\in G$ such 
that $(a_{i}^{h})^{x}=b_{i}$ for $i=1,\ldots,l$, since 
we may put $x=h^{-1}g$. Further, such $x$ must lie 
in $C_{G}(b_{1})$ since $b_{1}=(a_{1}^{h})^{x}=b_{1}^{x}$.
Therefore it suffices to call \textbf{Lemma 
\ref{conjugation_of_commuting_tuples}} 
recursively with the (commuting) tuples 
$(a_{2}^{h},\ldots,a_{l}^{h})$ and 
$(b_{2},\ldots,b_{l})$ and the subgroup $C_{G}(b_{1})$ 
in place of $G$. 
Before making the recursive call, we use \textbf{Lemma \ref{Lem:WordConversion}} to convert to a nilpotent 
presentation for $C_{G}(b_{1})$ 
and write each of $a_{i}^{h}$, $b_{i}$ relative to this 
presentation.

%We will search for $g$ in form $g=h h_1$. 
%Denoting $a_{i}'=a_{i}^h$ for $i=1,\ldots,l$, we have that 
%$a_i^g=b_i$ is equivalent to $a_i'^{h_1}=b_i$. 
%Since $a_1'=b_1$, if $h_1$ exists it must lie in $C_G(b_1)$. 
%Hence finding $h_{1}$ is equivalent to solving the conjugation problem for $(l-1)$-tuples $(a_2',\ldots,a_l')$, $(b_2,\ldots,b_l)$ in $C_G(b_1)$. Note that since $(a_1,\ldots,a_l)$ are pairwise commutative so are 
%$(a_2',\ldots,a_l')$ and for all $i$, 
%$a_i'\in C_G(a_1')=C_G(b_1)$.  

If we obtain a conjugator $x$, we may return $g=h x$, using the 
map $\phi$ provided by Lemma \ref{Lem:WordConversion} to write $x$ in the original 
generators $X$.  In addition, we obtain a generating set for the centralizer of 
$\{b_{2},\ldots,b_{l}\}$ in $C_{G}(b_{1})$, which is precisely a generating set for the centralizer of the 
complete set $\{b_{1},\ldots,b_{l}\}$ in $G$.  As above, we must use $\phi$ to write 
these words in generators $X$.
If the recursive call returns `No', then the tuples are 
not conjugate.

\emph{Complexity.}
The depth $l$ of the recursion is constant and we need 
only store $h$ and the (constant-sized) generating set 
for the centralizer at each step of the recursion, hence the general 
argument given at the beginning of the section applies.

%We allow Lemma \ref{conjugation_of_commuting_tuples} to return its output in 
%binary coordinate form.  Hence the coordinates of $h$ have logarithmic bit-size, 
%as do the coordinates of all $a_{i}^{h}$.  
%Consequently, the (compressed-form) input to the recursive call 
%is logarithmic in size and the recursion runs in logarithmic 
%space and time $O(\log^{4} L)$. Note that the depth of the 
%recursion is the constant $l$.

%If compressed inputs are used, the input words have length $O(2^{L})$ and the maximum
%word length during computation is polynomial in $2^{L}$, hence the number of bits remains 
%$O(L)$. The time and space bounds follow immediately.
\end{proof}

We now give the algorithm for determining conjugacy 
of two subgroups.

\begin{theorem}\label{Thm:ConjugacySubgroups}\label{conjugation_of_subgroups}
Fix integers $c$ and $r$.
There is an algorithm that, given a finitely presented 
nilpotent group $G=\langle X\,|\,R\rangle$ of nilpotency 
class at most $c$ with $|X|\leq r$ and two subgroups $H$ and $K$, determines if 
there exists  $g\in G$ such that 
\[
H^g=K
\]
and 
if so finds such an element $g$ as well as 
\[
\mbox{a generating 
set for the normalizer 
$N_{G}(K)$}
\]
of $K$.  The algorithm 
runs in space logarithmic 
in the total length $L$ of the input
and time quasilinear in $L$, or in \tcz{}, and the length of every 
output word is bounded by 
a polynomial function of $L$.  If compressed inputs are used, the 
space complexity is $O(M)$ and the time complexity $O(nM^{3})$ where $n$ is the total number of input words 
and $M$ bounds the encoded size of each input word.
\end{theorem}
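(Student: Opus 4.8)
The plan is to reduce the setwise problem $H^g=K$ to a sequence of linear problems by inducting along the lower central series $G=\Gamma_1\rhd\Gamma_2\rhd\cdots\rhd\Gamma_{c+1}=1$, exploiting that each factor $\Gamma_i/\Gamma_{i+1}$ is central in $G/\Gamma_{i+1}$ so that the obstruction to lifting a conjugator through one layer becomes affine-linear. Note that one cannot simply feed the full-form generators of $H$ and $K$ into Lemma \ref{conjugation_of_commuting_tuples}: those generators need not commute, and, more fundamentally, $H^g=K$ asks only that $\{h_i^g\}$ \emph{generate} $K$ rather than match designated generators, so a genuine subgroup (not tuple) argument is required. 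First I would apply Lemma \ref{Lem:WordConversion} to pass to a consistent nilpotent presentation with Mal'cev basis $\{a_1,\ldots,a_m\}$ and compute the full-form sequences of $H$ and $K$ via \textbf{Full-form Sequence}. Since each $\Gamma_i$ is characteristic, any conjugator satisfies $(H\cap\Gamma_i)^g=K\cap\Gamma_i$; because $\Gamma_c$ is central, conjugation fixes it pointwise, so $H\cap\Gamma_c=K\cap\Gamma_c$ is a clean necessary condition. I would check this and the compatibility of the two pivot structures modulo each $\Gamma_i$ up front using \textbf{Membership}.

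The induction maintains, in each quotient $G/\Gamma_{i+1}$, a single conjugator $g_i$ with $\bar H^{g_i}=\bar K$ together with a generating set for $N_{G/\Gamma_{i+1}}(\bar K)$; the full set of conjugators modulo $\Gamma_{i+1}$ is then the coset $g_i\,N_{G/\Gamma_{i+1}}(\bar K)$, and the normalizer is exactly the stabilizer carried along for free. The base case is the abelianization $G/\Gamma_2$, where conjugation is trivial, so $\bar H$ and $\bar K$ are conjugate iff equal, with normalizer the whole quotient.

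The crux is the lifting step from $G/\Gamma_i$ to $G/\Gamma_{i+1}$. Setting $H'=H^{g_{i-1}}$, the groups $H'$ and $K$ agree modulo $\Gamma_i$, so I can index full-form generators as $u_j\in H'$, $v_j\in K$ with $u_j\equiv v_j\pmod{\Gamma_i}$. For a lift $n$ of a normalizing element $\bar n$, each $u_j^n$ differs from its expected image in $H'$ by an element of the central group $\Gamma_i/\Gamma_{i+1}$, and, centrality forcing linearity, this difference depends affine-linearly on the $\Gamma_i/\Gamma_{i+1}$-component of $n$. The requirement $(H')^n=K$ in $G/\Gamma_{i+1}$ thus becomes a system of linear equations over the finitely generated abelian group $\Gamma_i/\Gamma_{i+1}$ (with torsion factors governed by the $e_j$, $j\in\mathcal{T}$); its solution set, if nonempty, is a coset whose homogeneous part, pulled back, is precisely the lifted normalizer. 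I would read off the needed commutator data from relators (\ref{relator1})--(\ref{relator3}), solve the integer system by Hermite/Smith-form computations, and use Lemma \ref{conjugation_of_commuting_tuples} with \textbf{Centralizer} to perform the simultaneous conjugation of the commuting central-layer images and to produce the stabilizer that becomes $N_{G/\Gamma_{i+1}}(\bar K)$. If any level is inconsistent, return \textbf{No}.

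Finally I would translate the output conjugator $g_c$ and the normalizer generators back to $X$ using the map $\phi$ of Lemma \ref{Lem:WordConversion}. For complexity, the recursion depth is $c$, a constant, and at each level only a constant number of words (one conjugator plus a bounded-size normalizer generating set) are stored, so the general argument of this section delivers the logspace/quasilinear and \tcz{} bounds, and the compressed bounds $O(M)$ space and $O(nM^3)$ time. I expect the main obstacle to be precisely this lifting step: formulating the affine system over $\Gamma_i/\Gamma_{i+1}$ correctly in the presence of torsion, updating the normalizer consistently as the homogeneous part at every level, and certifying that the integer linear algebra stays within the stated space and time budgets in both the uncompressed and compressed settings.
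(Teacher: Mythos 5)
Your overall strategy is a genuinely different route from the paper's — a top-down orbit--stabilizer lifting along the lower central series, in the style of algorithms for polycyclic groups — but the crux lifting step, which you yourself flag as the main obstacle, contains a concrete error and is where the proposal breaks down. You claim the discrepancy $u_j^n$ versus its expected image ``depends affine-linearly on the $\Gamma_i/\Gamma_{i+1}$-component of $n$.'' Since $\Gamma_{i+1}=[G,\Gamma_i]$, the central-layer component of $n$ acts \emph{trivially} by conjugation modulo $\Gamma_{i+1}$, so it cannot be the unknown of your system; the true unknown is the image of $n$ in the previously computed normalizer $N_{G/\Gamma_i}(\overline{K})$. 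For that unknown, linearity is exactly what needs proof and fails naively: writing $u_j=v_jc_j$ with $c_j\in\Gamma_i$, the condition $(H')^n=K$ modulo $\Gamma_{i+1}$ is $v_j^nc_j\in K\Gamma_{i+1}$, i.e.\ a condition in $K\Gamma_i/K\Gamma_{i+1}\cong\Gamma_i/\bigl((K\cap\Gamma_i)\Gamma_{i+1}\bigr)$ — but $n$ normalizes $K\Gamma_i$ and in general \emph{not} $K\Gamma_{i+1}$, so ``the class of $[v_j,n]$'' is not a well-defined homomorphism in $n$. To repair this one must parametrize all subgroups $U\leq G/\Gamma_{i+1}$ with $\overline{U}=\overline{K}$ and matching layer intersection $U\cap\overline{\Gamma_i}=\overline{K\cap\Gamma_i}$ (a condition you must verify at \emph{every} level, not only your check $H\cap\Gamma_c=K\cap\Gamma_c$) by derivations $\overline{K}\to\Gamma_i/(K\cap\Gamma_i)\Gamma_{i+1}$, and prove the conjugation action is affine on that parameter space; none of this is in the proposal, and it is the entire content of the step. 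Note also that your invocation of Lemma \ref{conjugation_of_commuting_tuples} on ``the commuting central-layer images'' is vacuous — conjugation fixes $\Gamma_i/\Gamma_{i+1}$ pointwise, so the lemma can do no work where you have placed it — and your generator-matching formulation of the system quietly reverts to tuple conjugacy, contradicting your own (correct) opening observation that the setwise problem $H^g=K$ is weaker than matching designated generators.

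The paper sidesteps this cohomological machinery entirely by recursing bottom-up rather than top-down. It takes the largest $j$ with $H\cap\Gamma_j\neq 1$, observes that $H_j=H\cap\Gamma_j$ and $K_j=K\cap\Gamma_j$ are \emph{abelian} (since $[H_j,H_j]\leq H\cap\Gamma_{j+1}=1$), and reduces their subgroup conjugacy to simultaneous conjugacy of commuting tuples: because $\Gamma_j/\Gamma_{j+1}$ is central, conjugacy forces $\overline{H_j}=\overline{K_j}$ in $G/\Gamma_{j+1}$, generators can be aligned via \textbf{Preimage}, and injectivity of the quotient map on $K_j$ forces $h_i^x=k_i$ — this is exactly where Lemma \ref{conjugation_of_commuting_tuples} earns its keep, and it resolves the set-versus-tuple issue you raised without any linear system. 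The same argument gives $N_G(K_j)=C_G(K_j)$ for free, and the algorithm then recurses on $\widehat{H^x}$ and $\widehat{K}$ in $N_G(K_j)/K_j$, with $j$ strictly decreasing. If you wish to pursue your route, you would need to build and verify the affine action on the derivation space within the stated complexity budget; as written, the proposal asserts the key reduction where the paper supplies a different and complete mechanism.
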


%Now we are ready to provide a proof for theorems \ref{conjugation_of_subgroups} and \ref{normalizer} in general case. We will prove them simultaneously by induction on the maximal $k$ such that $K\cap\Gamma_k$ is nontrivial. 

\begin{proof}
\emph{Algorithm.} Begin by converting, if necessary, 
to a nilpotent
presentation of $G$ using \textbf{Lemma 
\ref{Lem:WordConversion}}.

The algorithm recurses on the maximum $j$ 
such that $H\cap\Gamma_{j}\neq 1$ and $H\cap\Gamma_{j+1}=1$.
To find $j$, simply compute the 
\textbf{Full-form Sequence} 
for $H$ and observe that if the last element of the sequence 
begin with the letter $a_{k}$ then $j$ is the unique index 
such that 
$a_{k}\Gamma_{j+1}$ belongs to the generating set of
$\Gamma_{j}/\Gamma_{j+1}$ (see \S\ref{Sec:Malcev}).
Compute similarly 
the maximum $j'$ such that $K\cap\Gamma_{j'}\neq 1$ 
and $K\cap\Gamma_{j'+1}=1$.  If $j\neq j'$, then $H$ and $K$ 
are not conjugate since their conjugacy would imply conjugacy of 
$H\cap \Gamma_{i}$ with $K\cap \Gamma_{i}$ for all $i$ 
(since the $\Gamma_{i}$ are normal subgroups), hence 
equality of $j$ and $j'$. 

Denote $H_{j}=H\cap \Gamma_{j}$ and produce the 
full-form sequence for this group 
by taking the elements of the full-form sequence for $H$ that are in $\Gamma_{j}$.
Proceed similarly for $K_{j}=K\cap \Gamma_{j}$.  
Next, we check conjugacy of $H_{j}$ with $K_{j}$.

\emph{Conjugacy of $H_{j}$ with $K_{j}$.}
Let $\overline{\phantom{\phi}}:G\rightarrow G/{\Gamma_{j+1}}$
be the natural homomorphism. By the definition of central series, $G$ acts trivially by conjugation on $\overline{\Gamma_j}$.  Hence if $H_{j}$ and $K_{j}$ 
are conjugate then $\overline{H_{j}}=\overline{K_{j}}$. 
We first check if $\overline{H_{j}}=\overline{K_{j}}$, returning `No' if not.  To do so, it suffices to compute the \textbf{Full-form Sequences} for 
$\overline{H_{j}}$ and $\overline{K_{j}}$, and check them for equality.

Let $(h_1,\ldots,h_l)$ be the full-form sequence for $H_{j}$, 
computed above.
We now produce a generating set $(k_1,\ldots,k_l)$ for 
$K_{j}$ such that $\overline{h_{i}}=\overline{k_{i}}$ for 
all $i$, as follows. Use the \textbf{Preimage} 
algorithm, with the subgroup $K_{j}$, the homomorphism 
$\overline{\phantom{\phi}}:K_{j}\rightarrow G/\Gamma_{j+1}$, 
and the element $\overline{h_{i}}$, to produce each $k_{i}$. 
Since $K_{j}\cap\Gamma_{j+1}=1$, $(k_{1},\ldots,k_{l})$ 
generates $K$.

We claim for any $x\in G$, $H_{j}^x=K_{j}$ if and only if 
$h_{i}^x=k_{i}$ for $i=1,\ldots,l$. Indeed, since the tuples
$(h_{1},\ldots,h_{l})$ and $(k_{1},\ldots,k_{l})$ 
are generating sets their conjugacy implies $H_{j}$ and $K_{j}$ are 
conjugate. Conversely, if $H_{j}^x=K_{j}$ then $h_i^x\in K_{j}$ for all $i$.  But  $\overline{h_i^x}=\overline{h_i}=\overline{k_i}$, and since $\ \bar{ }\ $ is 
injective on $K_{j}$, we have $h_i^{x}=k_i$ for all $i$.
Also observe that $H_{j}$ is abelian, since  
\[
[H_{j},H_{j}]\leq H_{j}\cap\Gamma_{2j}\leq H_{j}\cap\Gamma_{j+1}=1,
\]
and 
 similary for $K_{j}$.  Hence $(h_{1},\ldots,h_{l})$ and 
 $(k_{1},\ldots,k_{l})$ are both 
 tuples of commuting elements. So to determine 
conjugacy of $H_{j}$ with $K_{j}$ it suffices to use the algorithm of 
\textbf{Lemma 
\ref{conjugation_of_commuting_tuples}} to determine 
conjugacy of $(h_{1},\ldots,h_{l})$ and $(k_{1},\ldots,k_{l})$ and if so find a conjugator $x$ and a generating set $Y$
for $C_{G}(K_{j})$.  In fact, $C_{G}(K_{j})=N_{G}(K_{j})$ 
since if any element $y\in G$ 
normalizes $K_{j}$, then for each $i=1,\ldots,l$ we have 
$k_{i}^y \in K_{j}$ and hence $k_{i}^y=k_{i}$, arguing as above. 

\emph{Recursion.} If $j=0$, then $H=H_{j}$ and $K=K_{j}$ and we have already solved 
the problem.  Otherwise, letting 
\[
\widehat{\phantom{\phi}}:N_G(K_{j})\rightarrow N_G(K_{j})/K_{j}
\]
be the canonical homomorphism,
we reduce the problem to 
conjugation of $\widehat{H^{x}}$ and $\widehat{K}$ in $N_{G}(K_{j})/K_{j}$, as 
follows.

An element $g$ such that $H^g=K$ exists if and only if there exists $y\in G$ 
such that $(H^{x})^{y}=K$. Such an element $y$ must lie in $N_{G}(K_{j})$, since 
\[
K_{j}^{y}=(H_{j}^{x})^{y}
    =(H\cap\Gamma_{j})^{xy} 
    =H^{xy}\cap \Gamma_{j}^{xy}
    =K\cap\Gamma_{j}=K_{j}.
\]
Now $K\leq N_{G}(K_{j})$, and $H^{x}\leq N_{G}(K_{j})$ since 
\[
K_{j}=H_{j}^{x}=(H\cap\Gamma_{j})^{x}=H^{x}\cap\Gamma_{j}\unlhd H^{x}.
\]
Finally, if 
$(\widehat{H^{x}})^{\widehat{y}}=\widehat{K}$ for some 
$\widehat{y}\in N_{G}(K_{j})/K_{j}$, we claim that $(H^{x})^{y}=K$.  Indeed, if $k\in K$ then for some 
$h\in H$ and $k'\in K_{j}$ we have 
$k=y^{-1}h^x y k'=y^{-1}(h^{x} (k')^{y^{-1}})y$. 
But $y\in N_{G}(K_{j})$ and  $K_{j}\leq H^{x}$, 
so $h^{x} (k')^{y^{-1}}\in H^{x}$
and the inclusion $K\subseteq H^{x}$ 
follows.  The reverse inclusion is proved similarly.

In order to solve the conjugation problem 
of $\widehat{H^{x}}$ and $\widehat{K}$ in 
$N_{G}(K_{j})/K_{j}$, we first use \textbf{Lemma \ref{Lem:WordConversion}}, 
with the generating set $Y$, 
to find a nilpotent presentation for $N_{G}(K_{j})$ and 
to convert the generating sets for $H^{x}$, $K$, and $K_{j}$ into coordinate 
form in this presentation. Add the generators of $K_{j}$ to this presentation 
to obtain a presentation for $N_{G}(K_{j})/K_{j}$, and 
call \textbf{Theorem 
\ref{Thm:ConjugacySubgroups}} with this presentation and the subgroups 
$\widehat{H^{x}}$ and $\widehat{K}$.

It is essential to prove that the value of $j$ decreases in the recursive call. 
Letting $N_{j}$ denote term $j$ of the lower central series of $N_{G}(K_{j})$, 
we have that $N_{j}\leq \Gamma_{j}$, hence $K\cap N_{j}\leq K\cap \Gamma_{j}=K_{j}$, 
and the intersection is trivial modulo $K_{j}$, hence $j$ must decrease.

The recursive call either proves that $\widehat{H^{x}}$ and $\widehat{K}$ are not 
conjugate, in which case $H$ and $K$ are not conjugate, or returns a conjugator 
$y K_{j}$ and a generating set 
$Z \cdot K_{j}$ for the normalizer of  
$\widehat{K}$ in $N_{G}(K_{j})/K_{j}$.  
Note that 
$y$ (and each element of $Z$) is 
given as a word over the generating set of 
$N_{G}(K_{j})$ with binary exponents.  We 
convert back to the generating set $X$ of $G$ 
using the map $\phi$ provided by Lemma \ref{Lem:WordConversion}. For the conjugator, 
we return the word $g=xy$.

For the normalizer, we append to $Z$ a generating set of $K_{j}$ to 
obtain a generating set $Z'$ for the normalizer of $K$ in $N_{G}(K_{j})$. But this is precisely the normalizer of $K$ 
in $G$: if $K^{z}=K$ for some $z\in G$ then 
$K_{j}^{z}=K^{z}\cap \Gamma_{j}^{z}=K\cap\Gamma_{j}=K_{j}$ 
and so $N_{G}(K)\leq N_{G}(K_{j})$.

\emph{Complexity.} 
The depth of the recursion is bounded by the constant $c$, 
and the number of words to store in memory is constant.
\end{proof}

It should be noted that while the algorithm does not 
compute the normalizer of $K$ in the event that $H$ and 
$K$ are not conjugate, one may of course obtain it by running 
the algorithm with $H=K$.

\subsection{Coset intersection}

We describe an  
algorithm to compute 
the intersection of cosets in finitely generated nilpotent groups, and apply 
it to solving the simultaneous conjugacy problem.
Recall that in any group, the intersection $g_{1} H\cap g_{2} K$ of two 
cosets is, if non-empty, a coset of the intersection $H\cap K$.

\begin{theorem}\label{intersection_of_cosets}
Fix integers $c$ and $r$.  There is an algorithm that, given 
a finitely presented nilpotent group 
$G=\langle X\,|\,R\rangle$ of nilpotency class at most $c$ 
with $|X|\leq r$, two subgroups 
$H$ and $K$ of $G$, and two elements $g_{1}$ and $g_{2}$ of $G$, 
determines if the intersection $g_{1} H\cap g_{2} K$ is 
non-empty and if so, produces a generating set for 
$H\cap K$ and an element $g'\in g_{1}H\cap g_{2}K$, hence 
\[
g_{1} H\cap g_{2} K= g'(H\cap K).
\]
The algorithm runs in space logarithmic in the length $L$ 
of the input and time quasilinear in $L$, or in \tcz{}.
If compressed inputs are used, the 
space complexity is $O(M)$ and the time complexity $O(nM^{3})$ where $n$ is the total number of input words 
and $M$ bounds the encoded size of each input word.
\end{theorem}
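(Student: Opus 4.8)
The plan is to solve the problem by recursion on the nilpotency class, factoring out the centre $\Gamma_c$ at each step and lifting the solution found below through the resulting central extension. First I would apply \textbf{Lemma \ref{Lem:WordConversion}} to pass to a consistent nilpotent presentation with Mal'cev basis $\{a_1,\ldots,a_m\}$. Then $\Gamma_c=\langle a_{s_{c-1}+1},\ldots,a_m\rangle$ is central, the quotient map $\ \bar{}\ \colon G\to\bar G=G/\Gamma_c$ is realized simply by truncating Mal'cev coordinates, and $\bar G$ is nilpotent of class $c-1$; the projections $\bar H,\bar K,\bar g_1,\bar g_2$ are obtained by truncating full-form sequences and coordinate tuples. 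I would then call \textbf{Theorem \ref{intersection_of_cosets}} recursively on $\bar g_1\bar H\cap \bar g_2\bar K$ in $\bar G$. If that intersection is empty then so is $g_1H\cap g_2K$ and we return `No'. The base case $c=1$ is abelian: there $g_1H\cap g_2K\neq\emptyset$ iff $g_2^{-1}g_1\in HK$, and both this membership test and a generating set for $H\cap K$ follow from the matrix-reduction and full-form-sequence machinery of \cite{MMNV2014}.

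The substance is the lifting step. Suppose the recursion returns an element $\bar g'$ and generators of $\bar D:=\bar H\cap\bar K$. Using \textbf{Membership} in $\bar H$ together with \textbf{Preimage} for the projection $H\to\bar H$ (and likewise for $K$), I would extract $h_0\in H$ and $k_0\in K$ with $\overline{g_1h_0}=\overline{g_2k_0}=\bar g'$, so that $g_1h_0=g_2k_0 z_0$ for a computable central $z_0\in\Gamma_c$. Any genuine solution $g_1h=g_2k$ projects to a solution mod $\Gamma_c$, and these are parametrized by $\bar w=\overline{h_0^{-1}h}=\overline{k_0^{-1}k}\in\bar D$. For each generator $\bar w_i$ of $\bar D$ I would lift it to $w_{H,i}\in H$ and $w_{K,i}\in K$ via \textbf{Preimage} and record the central element $c(\bar w_i)=w_{K,i}w_{H,i}^{-1}\in\Gamma_c$. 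A short central computation shows that $\bar w\mapsto c(\bar w)$ induces a homomorphism $c\colon\bar D\to\Gamma_c/Z_0$, where $Z_0=(H\cap\Gamma_c)(K\cap\Gamma_c)$, and that the set of central corrections realizable by varying $h,k$ over lifts is exactly the subgroup $W=\langle Z_0,\,c(\bar w_1),\ldots\rangle\le\Gamma_c$. Hence $g_1H\cap g_2K\neq\emptyset$ iff $z_0\in W$, a membership question in the abelian group $\Gamma_c$; when it holds, reading the expression of $z_0$ over the generators of $W$ determines the required $\bar w$ and its lifts, and therefore an explicit $g'\in g_1H\cap g_2K$.

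To return generators of $H\cap K$ I would present it as an extension $1\to (H\cap\Gamma_c)\cap(K\cap\Gamma_c)\to H\cap K\to \ker c\to 1$: an element $\bar w\in\bar D$ admits a common lift into $H\cap K$ precisely when $c(\bar w)=1$ in $\Gamma_c/Z_0$, so the image of $H\cap K$ in $\bar D$ is $\ker c$. I would compute $\ker c$ with the \textbf{Kernel} algorithm applied to the homomorphism $c$ into the abelian group $\Gamma_c/Z_0$, lift each of its generators to a genuine element of $H\cap K$ by the same central bookkeeping used above, and obtain $H\cap K\cap\Gamma_c=(H\cap\Gamma_c)\cap(K\cap\Gamma_c)$ as an intersection of two explicit subgroups of the abelian group $\Gamma_c$. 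Finally I would convert every output word back to the original generators $X$ of $G$ using the map $\phi$ supplied by Lemma \ref{Lem:WordConversion}.

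For complexity, the recursion has depth at most $c$, and at each level only a constant number of words are stored (the lifts $h_0,k_0$, the generators of $\bar D$ and of $W$, and the recorded central corrections), so the general argument at the start of this section yields the logspace/quasilinear and \tcz{} bounds, as well as the stated compressed bounds. The main obstacle I anticipate is the lifting step itself: verifying that $c$ is a well-defined homomorphism independent of the choice of lifts modulo $Z_0$, that $W$ is exactly the set of realizable central corrections, and that both the membership test $z_0\in W$ and the extension description of $H\cap K$ reduce cleanly to the abelian subgroup, \textbf{Kernel}, and \textbf{Membership} subroutines without inflating the number of stored words. The surrounding projection and recursion steps are routine given the Mal'cev-coordinate description of $\Gamma_c$.
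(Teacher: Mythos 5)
Your proposal is correct, and its skeleton---induction on the nilpotency class, the identical abelian base case (membership of $g_2^{-1}g_1$ in $HK$ plus a kernel computation for $H\cap K$), and a lifting step through the central extension $1\to\Gamma_c\to G\to\bar G\to 1$ with all difficulty pushed into central correction elements---coincides with the paper's. Where you genuinely diverge is in how the lifting step is organized. The paper works with the preimage $\Lambda$ of $\bar H\cap\bar K$, computes \emph{aligned} full-form sequences $\Lambda\cap H=\langle v_1c_1,\ldots,v_nc_n,y_1,\ldots,y_t\rangle$ and $\Lambda\cap K=\langle v_1,\ldots,v_n,z_1,\ldots,z_s\rangle$ with $c_i\in\Gamma_c$, tests emptiness via the condition $C_1\cap c_0(\Lambda\cap K\cap\Gamma_c)\neq\emptyset$ where $C_1=\langle c_1,\ldots,c_n,y_1,\ldots,y_t\rangle$ (equivalent, up to inversion conventions, to your test $z_0\in W$, since $\Gamma_c\subseteq\Lambda$ gives $\Lambda\cap K\cap\Gamma_c=K\cap\Gamma_c$), and produces $H\cap K$ by applying the non-homomorphic map $\theta$ to the generators of a kernel $P\leq\integers^{n+t}$---which forces the most technical part of the paper's proof, a row-echelon induction showing $\langle\theta(p_1),\ldots,\theta(p_b)\rangle=H\cap K$. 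You instead observe that the corrections assemble into a genuine homomorphism $c\colon\bar D\to\Gamma_c/Z_0$ with $Z_0=(H\cap\Gamma_c)(K\cap\Gamma_c)$ (well-definedness modulo $Z_0$ and multiplicativity are immediate from centrality of $\Gamma_c$), so that emptiness becomes membership of $z_0$ in the preimage $W$ of $c(\bar D)$, and $H\cap K$ sits in the extension $1\to(H\cap\Gamma_c)\cap(K\cap\Gamma_c)\to H\cap K\to\ker c\to 1$; the \textbf{Kernel} subroutine applied to $c$ then replaces the paper's echelon induction by a short standard argument (the image of $H\cap K$ in $\bar D$ is exactly $\ker c$ because the achievable corrections over a fixed $\bar w$ form the coset $c(\bar w)Z_0$). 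This is a real simplification of the correctness proof at no algorithmic cost. The one point you should make explicit is that both reading $\bar w$ off an expression of $z_0$ over the \emph{given} generators of $W$, and splitting $d_Kd_H^{-1}\in Z_0$ as a product $\eta\kappa$ with $\eta\in H\cap\Gamma_c$, $\kappa\in K\cap\Gamma_c$ when lifting generators of $\ker c$, require expressing elements over given generating sets rather than over a full-form sequence---precisely the device (Cor.\ 3.9 of \cite{MMNV2014}, Thm.\ 14 of \cite{MW17}) that the paper invokes in its base case and again when writing $c_0'$ over $c_1,\ldots,c_n,y_1,\ldots,y_t$; your appeal to the matrix-reduction machinery covers this. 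The complexity analysis (depth-$c$ recursion, constantly many stored words, full-form sequences keeping generator counts bounded by $m$) matches the paper's.
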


\begin{proof}
Begin by using \textbf{Lemma \ref{Lem:WordConversion}} 
to convert to a nilpotent presentation 
for $G$, if necessary. We proceed by induction on the nilpotency class 
$c$.

\emph{Base case c=1.}  In this case, $G$ is abelian. First, 
we will determine if the intersection 
is non-empty and if so find $g$.  Writing 
\[
g_{1} H\cap g_{2} K=g_{2}(g_{2}^{-1}g_{1} H\cap K),
\]
it suffices to determine if there exists $h\in H$ such that 
$g_{2}^{-1}g_{1} h\in K$. Since $G$ is abelian, this occurs if and 
only if $g_{2}^{-1}g_{1} \in \langle H\cup K\rangle$. We 
use the \textbf{Membership algorithm}, with 
the union of the \textbf{Full-form sequences} of 
$H$ and $K$ as a generating 
set for $\langle H\cup K\rangle$, to determine if this is the case, 
returning `No' if it is not.  Otherwise, we obtain 
an expression of $g_{2}^{-1} g_{1}$ as a linear combination 
of the elements of the full-form sequence for 
$\langle H\cup K\rangle$. We can convert to an expression 
in terms of the full-form sequences for $H$ and $K$,
thus obtaining an 
expression $g_{2}^{-1}g_{1}=hk$ for some 
elements 
$h\in H$ and $k\in K$, by following the procedure described 
in Cor. 3.9 of \cite{MMNV2014} (essentially, recording 
an expression of each matrix row in terms of the given 
generators during the matrix reduction process).  
This corollary gives only polynomial time, but Thm. 14 
of \cite{MW17} gives the corresponding result for 
\tcz{} (hence logspace), though we need the fact that 
$g_{2}^{-1} g_{1}$ and the full-form sequences 
of $H$ and $K$ are stored using only $O(\log L)$ bits.
%with 
%no space bound, 
%in the abelian case with a fixed number 
%of given subgroup generators it is easy to obtain a 
%logspace and quasilinear time algorithm.
We now set 
$g=g_{1}h^{-1}$ and obtain $g_{1} H\cap g_{2} K = g(H\cap K)$.

We will now find a generating set for $H\cap K$.
Let $\{u_{1},\ldots,u_{n}\}$ be the generating set for $H$ and  
consider the homomorphism 
$\phi:\integers^{n}\rightarrow G$ defined by
\[
\phi(\alpha_{1},\ldots,\alpha_{n})=u_{1}^{\alpha_{1}}\cdots
u_{n}^{\alpha_{n}}
\]
and the composition $\phi':\integers^{n}\rightarrow G\rightarrow G/K$.
An element  
$u_{1}^{\alpha_{1}}\cdots u_{n}^{\alpha_{n}}$ of $H$ 
is also an element of $K$ if and only if  
$(\alpha_{1},\ldots,\alpha_{n})$ is in the kernel of $\phi'$,
 hence $H\cap K=\phi(\ker \phi')$.
To compute the kernel, add the generators of $K$ to the relators of $G$ to obtain a presentation of $G/K$, and pass this 
group together with $\phi'$ and the standard presentation 
of $\integers^{n}$ to the \textbf{Kernel algorithm}.
Applying $\phi$ to each resulting subgroup generator, 
we obtain a generating set (in fact, the full-form sequence) 
for $H\cap K$.

\emph{Inductive case.} Denote by 
$\overline{\phantom{\phi}}:G\rightarrow G/\Gamma_{c}$ the canonical 
homomorphism.  Invoke \textbf{Theorem \ref{intersection_of_cosets}} recursively in $G/\Gamma_{c}$ 
with inputs $\overline{H}$, $\overline{K}$, $\overline{g_{1}}$, 
and $\overline{g_{2}}$. 
Note that it suffices to erase all generators of $\Gamma_{c}$ 
to compute $\overline{\phantom{\phi}}$ (more formally,
one may use Lemma \ref{Lem:WordConversion}).

If the recursive call determines that 
$\overline{g_{1}} \overline{H}\cap \overline{g_{2}}\overline{K}$ is empty, then 
so is $g_{1} H\cap g_{2}K$.  Otherwise, we obtain an element 
$\overline{\gamma}\in \overline{g_{1}}\overline{H}\cap\overline{g_{2}}\overline{K}$ 
and a generating set 
$\overline{w_{1}},\ldots,\overline{w_{l}}$ of 
$\overline{H}\cap\overline{K}$, hence 
\[
\overline{g_{1}}\overline{H}\cap \overline{g_{2}}\overline{K}= \overline{\gamma}\langle \overline{w_{1}},\ldots,\overline{w_{l}}\rangle.
\]
Denote by $\Lambda$ (but do not compute) the preimage 
of $\overline{H}\cap\overline{K}$ under 
$\overline{\phantom{\phi}}$. 
We will rewrite the intersection $g_{1}H\cap g_{2}K$ in the 
form 
\begin{equation}\label{int}
g_{1} H\cap g_{2} K = (g'(\Lambda\cap H))\cap (g' c_{0}(\Lambda\cap K))
\end{equation}
for certain $g'\in G$, $c_{0}\in \Gamma_{c}$ defined below. 
Compute a \textbf{Preimage} $x_{1}$ of $\overline{g_{1}^{-1}\gamma}$ 
in $H$ and a \textbf{Preimage} $x_{2}$ of $\overline{g_{2}^{-1}\gamma}$ in $K$.  Let 
\[
g'=g_{1}x_{1}.
\]
Since 
$\overline{g_{2} x_{2}}=\overline{\gamma}=\overline{g_{1}x_{1}}$, 
it follows that 
$(g_{1}x_{1})^{-1}g_{2}x_{2}=c_{0}\in \Gamma_{c}$.

To see that (\ref{int}) holds, let $u$ be an element of the left side. Then $u=g_{1} h$ and for some $h\in H$ and 
$\overline{u}=\overline{\gamma}\overline{\lambda}$ for some  
$\lambda\in\Lambda$. Then for some $c'\in\Gamma_{c}$, 
$u=g_{1}x_{1}\lambda c'$ 
hence $\lambda c'\in H$ since $x_{1}\in H$.
Clearly 
$\lambda c'\in\Lambda$, hence $u\in g'(\Lambda\cap H)$.  Similarly 
$u\in g'c_{0}(\Lambda\cap K)$.  Conversely, any element of the right 
side has the form $g'h=g_{1}x_{1}h$ for some $h\in H$ hence is in 
$g_{1}H$, and has the form $g'c_{0}k=g_{2}x_{2}k$ for some $k\in K$ hence 
is in $g_{2}K$.

We will now find the full-form sequences for 
$\Lambda\cap H$ and $\Lambda\cap K$.
Apply the \textbf{Preimage} algorithm 
%with the homomorphism 
%$\overline{\phantom{\phi}}$ and the subgroup $K\leq G$, 
to compute for each $\overline{w_{i}}$ preimages
$u_{i}'\in H$ and $v_{i}'\in K$. 
%which, by replacing the original generating 
%set $\overline{w_{1}},\ldots,\overline{w_{m}}$, we may 
%assume is precisely $w_{i}$.  
%Again using \textbf{Preimage}, 
%compute a preimage in $H$ for each $\overline{w_{i}}$.  
%Since the kernel is $\Gamma_{c}$, each preimage 
%has the form $w_{i}c_{i}$ for some $c_{i}\in \Gamma_{c}$. 
Compute a generating set $y_{1}',\ldots,y_{s'}'$
for $H\cap \Gamma_{c}$ by 
finding the \textbf{Full-form sequence} for $H$ 
and selecting only those elements that belong to $\Gamma_{c}$.
Similarly, compute a generating set $z_{1}',\ldots,z_{t'}'$ 
for $K\cap \Gamma_{c}$.  We now have 
\begin{eqnarray*}
\Lambda\cap H &=& 
\langle u_{1}',\ldots,u_{l}',y_{1}',\ldots,y_{s'}'\rangle.
\\
\Lambda\cap K &=& 
\langle v_{1}',\ldots,v_{l}',z_{1}',\ldots,z_{t'}'\rangle,
\end{eqnarray*}
Using the generating sets above, 
find the \textbf{Full-form sequence} 
$(v_{1},\ldots,v_{n},z_{1},\ldots,z_{s})$ for $\Lambda\cap K$, 
where $z_{1}$ denotes the first generator of the sequence 
that lies in $\Gamma_{c}$.  Likewise find the 
\textbf{Full-form sequence} 
$(u_{1},\ldots,u_{n'},y_{1},\ldots,y_{t})$ for $\Lambda\cap H$, with $y_{1}$ being the first generator in $\Gamma_{c}$.
Since $\Lambda\cap H$ and $\Lambda\cap K$ have the same 
image $\overline{H}\cap\overline{K}$ under $\overline{\phantom{\phi}}$, it follows that $n'=n$ and for 
all $i=1,\ldots,n$ that $u_{i}=v_{i}c_{i}$ for some $c_{i}\in\Gamma_{c}$.  We now have the full-form sequences 
\begin{eqnarray}
\Lambda\cap H &=& 
\langle v_{1}c_{1},\ldots,v_{n}c_{n},y_{1},\ldots,y_{t}\rangle.
\label{LH}
\\
\Lambda\cap K &=& 
\langle v_{1},\ldots,v_{n},z_{1},\ldots,z_{s}\rangle,\label{LK}
\end{eqnarray}

The next step produces a generating set of $H\cap K$ and 
the element $g$.  The correctness of this step is argued 
below.  
Denote $C_{1}=\langle c_{1},\ldots,c_{n},y_{1},\ldots,y_{t}\rangle$ 
%for 
%$i=1,\ldots,n$ and $C_{n+1}=\langle y_{1},\ldots,y_{t}\rangle$ 
and consider 
the intersection
\[
C_{1}\cap (\Lambda\cap K\cap \Gamma_{c})
\]
in 
the abelian group $\Gamma_{c}$. Define 
a 
homomorphism
$\psi:\integers^{n+t}\rightarrow \Gamma_{c}$
by
\[
\psi(\alpha_{1},\ldots,\alpha_{n},\beta_{1},\ldots,\beta_{t})=
c_{1}^{\alpha_{1}}\cdots c_{n}^{\alpha_{n}} y_{1}^{\beta_{1}}
\cdots y_{t}^{\beta_{t}}.
\]
Using the composition 
$\psi':\integers^{n+t}\rightarrow \Gamma_{c}\rightarrow \Gamma_{c}/(\Lambda\cap K\cap \Gamma_{c})$, we may then use the \textbf{Kernel} algorithm,
as in the base case, to  
produce a 
finitely generated subgroup 
$P=\langle p_{1},\ldots,p_{b}\rangle\leq \integers^{n+t}$ 
such that 
\[
C_{1}\cap \Lambda\cap K\cap \Gamma_{c}=\psi(P).
\]
The sequence $(p_{1},\ldots,p_{b})$ is the full-form sequence for $P$, so the corresponding matrix formed is in row-echelon form.  We denote 
$p_{i}=(p_{i1},\ldots,p_{i(n+t)})$ for $i=1,\ldots,b$.
In addition, we use the \textbf{Membership} algorithm, 
as described in the base case, to find 
\[
c_{0}'\in C_{1}\cap c_{0}(\Lambda\cap K\cap\Gamma_{c})
\]
if such an element exists and to write $c_{0}'$ in the 
form 
\[
c_{0}'=c_{1}^{\alpha_{1}'}\cdots c_{n}^{\alpha_{n}'}
y_{1}^{\beta_{1}'}\cdots y_{t}^{\beta_{t}'}.
\]
If $c_{0}'$ does not exist, we return `No'.  
Otherwise, we define 
\[
h=(v_{1}c_{1})^{\alpha_{1}'}\cdots (v_{n}c_{n})^{\alpha_{n}'}
y_{1}^{\beta_{1}'}\cdots y_{t}^{\beta_{t}'}
\]
and return $g=g'h$.  For the generating set of $H\cap K$, 
define the \emph{function} (it is not, in general, a 
homomorphism)
$\theta:\integers^{n+t}\rightarrow \Lambda\cap H$ by
\[
\theta(\alpha_{1},\ldots,\alpha_{n},\beta_{1},\ldots,\beta_{t})
=(v_{1}c_{1})^{\alpha_{1}}\cdots (v_{n}c_{n})^{\alpha_{n}}
y_{1}^{\beta_{1}}\cdots y_{t}^{\beta_{t}}
\]
and return the \textbf{Full-form sequence} 
for the subgroup generated by the set 
\[
\Pi=\{\theta(p_{i})\,|\, 1\leq i\leq b\}.
\]

It remains to prove the correctness of the last step.  First, 
we prove that $\Pi$ generates 
$H\cap K=(\Lambda\cap H)\cap (\Lambda\cap K)$. Take any 
$p_{i}=(\alpha_{1},\ldots,\alpha_{n},\beta_{1},\ldots,\beta_{t})$.  
Then, using the fact that $C_{1}\leq\Gamma_{c}$ is 
in the center of $G$,
\begin{align}
\theta(p_{i})&=(v_{1}c_{1})^{\alpha_{1}}
\cdots(v_{n}c_{n})^{\alpha_{n}} y_{1}^{\beta_{1}}\cdots
y_{t}^{\beta_{t}}\label{L1}\\
& =  v_{1}^{\alpha_{1}}
\cdots v_{n}^{\alpha_{n}} c_{1}^{\alpha_{1}}\cdots c_{n}^{\alpha_{n}} y_{1}^{\beta_{1}}\cdots
y_{t}^{\beta_{t}}.\label{L2}
\end{align}
Line (\ref{L1}) gives $\theta(p_{i})\in\Lambda\cap H$ and, 
since $c_{1}^{\alpha_{1}}\cdots c_{n}^{\alpha_{n}}y_{1}^{\beta_{1}}\cdots y_{t}^{\beta_{t}}=\psi(p_{i})\in \psi(P)\subset \Lambda\cap K$, line 
(\ref{L2}) gives $\theta(p_{i})\in \Lambda\cap K$. 
Hence $\langle \Pi\rangle \leq H\cap K$. 

For the opposite inclusion, let 
$(\Lambda\cap H)_{i}=\langle v_{i}c_{i},\ldots,v_{n}c_{n},y_{1},\ldots,y_{t}\rangle$ for
$1\leq i\leq n$ and $(\Lambda\cap H)_{n+1}=\langle y_{1},\ldots,y_{t}\rangle$.  We will prove, by induction 
on $i$ in the reverse order $n+1,\ldots,1$, that 
\[
(\Lambda\cap H)_{i}\cap(\Lambda\cap K)\leq \langle \Pi\rangle
\]
for all $i=n+1,\ldots,1$ (in particular for $i=1$).
For the base case $i=n+1$, let 
$q\in (\Lambda\cap H)_{n+1}\cap(\Lambda\cap K)$. 
Then $q=c_{1}^{0}\cdots c_{n}^{0}y_{1}^{\beta_{1}}\cdots y_{t}^{\beta_{t}}$ 
for some $(0,\ldots,0,\beta_{1},\ldots,\beta_{t})=p$. 
Since $q\in C_{1}\cap\Lambda\cap K$, 
we have $p\in P$. Since the matrix corresponding to $P$ 
is in row echelon form, we may write $p$ as a linear 
combination 
\[
p=\sum_{j=k}^{b}\gamma_{j}p_{j}
\]
where $p_{j}=(0,\ldots,0,p_{j(n+1)},\ldots,p_{j(n+t)})$ for all $j\geq k$.
Then 
\begin{align*}
q&=(y_{1}^{p_{k(n+1)}}\cdots y_{t}^{p_{k(n+t)}})^{\gamma_{k}}
\cdots (y_{1}^{p_{b(n+1)}}\cdots y_{t}^{p_{b(n+t)}})^{\gamma_{b}} \\
 &=\theta(p_{k})^{\gamma_{k}}\cdots \theta(p_{b})^{\gamma_{b}}
\end{align*}
hence $q\in\langle \Pi\rangle$.

For the inductive case, assume that 
$(\Lambda\cap H)_{i+1}\cap(\Lambda\cap K)\leq \langle \Pi\rangle$ for some $i+1\leq n+1$ and let 
$q\in (\Lambda\cap H)_{i}\cap (\Lambda\cap K)$. 
Then 
\[
q=(v_{1}c_{1})^{0}\cdots (v_{i-1}c_{i-1})^{0} (v_{i}c_{i})^{\alpha_{i}}\cdots (v_{n}c_{n})^{\alpha_{n}}
y_{1}^{\beta_{1}}\cdots y_{t}^{\beta_{t}}
\]
for some $(0,\ldots,0,\alpha_{i},\ldots,\alpha_{n},\beta_{1},\ldots,\beta_{t})=p'$.
Since $q\in \Lambda\cap K$ 
and $v_{i}^{\alpha_{i}}\cdots v_{n}^{\alpha_{n}}\in\Lambda\cap K$, it follows, rewriting $q$ as in (\ref{L2}), that 
$c_{i}^{\alpha_{i}}\cdots c_{n}^{\alpha_{n}}y_{1}^{\beta_{1}}\cdots y_{t}^{\beta_{t}}\in \Lambda\cap K$ and hence $p'\in P$. Hence 
\[
p=\sum_{j=k'}^{b}\gamma_{j}' p_{j}
\]
for some $\gamma_{j}'$ and $k'\geq 1$  
such that $p_{j}=(0,\ldots,0,p_{ji},\ldots,p_{j(n+t)})$ 
for all $j\geq k'$.
Now consider the element 
\[
q'=q \theta(p_{k'})^{-\gamma_{k'}'}\cdots \theta(p_{b})^{-\gamma_{b}'}.
\]
In the word $\theta(p_{k'})^{-\gamma_{k'}'}\cdots \theta(p_{b})^{-\gamma_{b}'}$, the generators 
$v_{1}c_{1},\ldots,v_{i-1}c_{i-1}$ do not appear. Further,
the total exponent sum of the generator $v_{i}c_{i}$ 
is $-\alpha_{i}$, while in $q$ it is $\alpha_{i}$. 
Since 
for any $i<l\leq n$ and $1\leq l'\leq t$ the commutators 
$[v_{i}c_{i},v_{l}c_{l}]$ and $[v_{i}c_{i},y_{l'}]$ 
are elements of $(\Lambda\cap H)_{i+1}$
we may collect all 
occurrences of $v_{i}c_{i}$ and eliminate its occurrence 
in $q'$. Hence 
$q'\in (\Lambda\cap H)_{i+1}\cap(\Lambda\cap K)$.
By induction, 
$q'\in \langle \Pi\rangle$ hence $q\in \langle \Pi\rangle$ 
as well. 

Regarding the intersection being non-empty, observe that by (\ref{int}), 
\begin{align*}
g_{1} H\cap g_{2} K\neq \emptyset & \iff 
(\Lambda\cap H)\cap c_{0}(\Lambda\cap K)\neq \emptyset.
\end{align*}
Now if $g=(v_{1}c_{1})^{\alpha_{1}}\cdots (v_{n}c_{n})^{\alpha_{n}}y_{1}^{\beta_{1}}\cdots y_{t}^{\beta_{t}}\in \Lambda\cap H$ is also an element of $c_{0}(\Lambda\cap K)$ then, again rewriting as in (\ref{L2}), 
$c_{1}^{\alpha_{1}}\cdots c_{n}^{\alpha_{n}}y_{1}^{\beta_{1}}\cdots y_{t}^{\beta_{t}}$ 
is an element of $C_{1}$ which also lies in $c_{0}(\Lambda\cap K\cap \Gamma_{c})$ hence $c_{0}'$ exists.  Conversely, if 
$c_{0}'$ exists, then we have  
\[
c_{0}'=c_{1}^{\alpha_{1}'}\cdots c_{n}^{\alpha_{n'}}y_{1}^{\beta_{1}'}\cdots y_{t}^{\beta_{t}'}=
c_{0} z_{1}^{\gamma_{1}'}\cdots z_{s}^{\gamma_{s}'}
\]
hence 
\[
(v_{1}c_{1})^{\alpha_{1}'}\cdots (v_{n}c_{n})^{\alpha_{n}'} y_{1}^{\beta_{1}'}\cdots y_{t}^{\beta_{t}'}=
c_{0} v_{1}^{\alpha_{1}'}\cdots v_{n}^{\alpha_{n}'} 
z_{1}^{\gamma_{1}'}\cdots z_{s}^{\gamma_{s}'}
\]
is an element of $(\Lambda \cap H)\cap c_{0}(\Lambda\cap K)$, 
hence this intersection is non-empty.  This proves the correctness of the decision problem.

Finally, we must show that $g\in g_{1}H\cap g_{2}K$. 
Since $h\in \Lambda\cap H$, 
we have $g=g'h\in g'(\Lambda\cap H)$. 
Since $c_{0}'\in c_{0} (\Lambda\cap K)$, we have 
$c_{0}'=c_{0} k$ for some $k\in \Lambda\cap K$ hence 
\[
g=g'h = g' v_{1}^{\alpha_{1}'}\cdots v_{n}^{\alpha_{n}'}c_{0}'
=g'c_{0} (kv_{1}^{\alpha_{1}}\cdots v_{n}^{\alpha_{n}})
\]
is an element of $g'c_{0} (\Lambda\cap K)$, as required.

\emph{Complexity.} The depth of the recursion is $c$, which 
is constant, so the total number of subroutine calls is  
constant.  The total number of group elements to record 
is also constant.
\end{proof}

As an application of the intersection algorithm, we may 
generalize Lemma \ref{conjugation_of_commuting_tuples} to 
solve the simultaneous conjugation problem for tuples 
in nilpotent groups.

\begin{theorem}\label{conjugation_of_tuples}
Fix integers $c$, $r$, and $l$. There is an algorithm that,
given 
a nilpotent group $G=\langle X\,|\,R\rangle$ of nilpotency class at most $c$ with $|X|\leq r$ and two 
tuples $(a_{1},\ldots,a_{l})$ and $(b_{1},\ldots,b_{l})$ 
of elements of $G$, computes:

\begin{itemize}
\item an element $g\in G$ such 
that 
\[
g^{-1}a_{i}g=b_{i}
\]
for $i=1,\ldots,l$ 
\item a generating set for the 
centralizer $C_{G}(b_{1},\ldots,b_{l})$,
or determines 
that no such element $g$ exists.
\end{itemize}
The algorithm runs 
in space logarithmic in the size $L$ of the input and 
time quasilinear in $L$, or in \tcz{}, and the length of each output 
word is bounded by a polynomial function of $L$.  If compressed inputs are used, the 
space complexity is $O(M)$ and the time complexity 
$O(nM^{3})$ where $n=|R|$  
and $M$ bounds the encoded size of each input word.
\end{theorem}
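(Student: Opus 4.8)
The plan is to derive this theorem as a clean corollary of the coset intersection algorithm (Theorem \ref{intersection_of_cosets}), exactly as the surrounding text anticipates. First I would convert, if necessary, to a nilpotent presentation of $G$ using Lemma \ref{Lem:WordConversion}. Then, for each $i=1,\ldots,l$, I would apply the \textbf{Conjugacy} algorithm to the pair $(a_i,b_i)$: if for some $i$ no conjugator exists then no simultaneous conjugator can exist and we return `No', while otherwise we obtain an element $g_i$ with $a_i^{g_i}=b_i$ and, via the \textbf{Centralizer} algorithm, a generating set for $C_G(b_i)$. The crucial observation is that for fixed $i$ the complete set of conjugators carrying $a_i$ to $b_i$ is the left coset $g_i C_G(b_i)$, since $(g_i d)^{-1}a_i(g_i d)=d^{-1}b_i d$ equals $b_i$ precisely when $d\in C_G(b_i)$. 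Hence the set of \emph{simultaneous} conjugators is
\[
S=\bigcap_{i=1}^{l} g_i C_G(b_i),
\]
and $g$ exists if and only if $S\neq\emptyset$.

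Next I would compute $S$ by intersecting these cosets two at a time with Theorem \ref{intersection_of_cosets}. The first call, on $g_1 C_G(b_1)$ and $g_2 C_G(b_2)$, returns either `empty' (whence we return `No') or an element $g^{(2)}$ together with a generating set for $C_G(b_1)\cap C_G(b_2)$, giving $g_1 C_G(b_1)\cap g_2 C_G(b_2)=g^{(2)}\bigl(C_G(b_1)\cap C_G(b_2)\bigr)$. Feeding this coset and $g_3 C_G(b_3)$ into the next call and iterating, after $l-1$ calls we obtain either `empty' or a simultaneous conjugator $g=g^{(l)}$ together with a generating set for $\bigcap_{i=1}^{l}C_G(b_i)$. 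Since the centralizer of a set equals the intersection of the centralizers of its elements, $\bigcap_{i=1}^{l}C_G(b_i)=C_G(b_1,\ldots,b_l)$, so the subgroup emerging from the final intersection is exactly the centralizer we are required to output, and we return $g$ and its generating set.

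For complexity, the total number of \textbf{Conjugacy}, \textbf{Centralizer}, and coset-intersection calls is a constant multiple of $l$, hence constant since $l$ is fixed; at each stage only a constant number of words (the current conjugator and the generators of the current intersection subgroup) need be retained. The general argument of the section then yields the claimed logspace/quasilinear, \tcz{}, and compressed-input bounds, with polynomial output-length bounds inherited from the subroutines. I do not expect a genuinely hard step here: the only point requiring care is the coset-versus-centralizer bookkeeping—confirming that each single-conjugacy solution set is the \emph{left} coset $g_i C_G(b_i)$ (and not a right coset or a coset of $C_G(a_i)$), so that the inputs match the left-coset convention of Theorem \ref{intersection_of_cosets}, and verifying that the subgroup produced by the iterated intersection is precisely $C_G(b_1,\ldots,b_l)$ rather than merely a generating set needing further identification. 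Once the left/right conventions are fixed this is routine, which is why the commutativity hypothesis of Lemma \ref{conjugation_of_commuting_tuples} can now be dropped.
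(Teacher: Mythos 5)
Your proposal is correct and takes essentially the same approach as the paper's proof: it reduces each single conjugacy $a_i^g=b_i$ to membership in the left coset $g_i C_G(b_i)$ via the \textbf{Conjugacy} and \textbf{Centralizer} algorithms, then iterates Theorem \ref{intersection_of_cosets} to compute $\bigcap_{i=1}^{l} g_i C_G(b_i)$, obtaining a generating set for $\bigcap_{i=1}^{l} C_G(b_i)=C_G(b_1,\ldots,b_l)$ as a by-product. Your complexity argument (constantly many subroutine calls and stored words since $l$ is fixed) likewise matches the paper's.
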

\begin{proof}
\emph{Algorithm.}
Begin by applying \textbf{Lemma \ref{Lem:WordConversion}} to 
convert to a nilpotent presentation if necessary.
Next, for each $i=1,\ldots,l$, use the \textbf{Conjugacy} algorithm to find $g_{i}\in G$ such that $a_{i}^{g_{i}}=b_{i}$.
If any pair is not conjugate, then $g$ does not exists and 
we may return `No'.  We also use the \textbf{Centralizer} algorithm to find, for each $i$, a generating 
set for $C_{G}(b_{i})$. 

Now for any $g\in G$ and any $i$, the equation 
\[
a_{i}^{g}=a_{i}^{g_{i}g_{i}^{-1}g}=b_{i}^{g_{i}^{-1}g}
\]
shows that $a_{i}^{g}=b_{i}$ if and only if 
$g_{i}^{-1}g\in C_{G}(b_{i})$, i.e. $g\in g_{i}C_{G}(b_{i})$. 
Hence the set of all possible conjugators is precisely the 
coset intersection $\bigcap_{i=1}^{l} g_{i} C_{G}(b_{i})$
which we may compute by iterating \textbf{Theorem \ref{intersection_of_cosets}}. As a by-product, we obtain 
a generating set for 
\[
\bigcap_{i=1}^{l} C_{G}(b_{i})=C_{G}(b_{1},\ldots,b_{l}). 
\]
\emph{Complexity.} Since $l$ is fixed, the number of subroutine calls and 
elements of $G$ to store is constant.
\end{proof}

\subsection{Torsion subgroup}
In every nilpotent group $G$ the set $T$ consisting of all 
elements of finite order forms a subgroup called 
the torsion subgroup.  We give 
an algorithm to compute, from a presentation of $G$, 
a generating set and presentation for $T$ as well as its 
order. 
We follow an algorithm outlined in 
\cite{KRRRC69}.

\comment{
We begin by observing that 
the order of $T$ is polynomially bounded, as is the length 
of the Mal'cev normal form of every element of $T$.

\begin{lemma}\label{Lem:OrderT}
For each pair of positive integers $(c,r)$ 
there exists a polynomial 
function $f(n)$
with the following properties. 
Let  
%$G=\langle a_{1},\ldots,a_{m}\,|\,R\rangle$ be a 
$G=\langle X\,|\, R\rangle$ be  
a nilpotent group of nilpotency class 
at most $c$ with $|X|\leq r$, let $T$ be the 
torsion subgroup of $G$, and let $L$ be the sum of the lengths 
of the relators in $R$.  Then 
\begin{enumerate}[(a)]
\item $|T|\leq f(L)$, and 
\item if $A$ is any Mal'cev 
basis for $G$, then the Mal'cev normal form of every element 
of $T$ has length bounded by 
$f(M)$, where $M$ bounds both $L$ and the length of 
the Mal'cev normal form of every $x\in X$.
\end{enumerate}
\end{lemma}
\begin{proof}
We first consider the case when 
$\langle X\,|\,R\rangle=\langle a_{1},\ldots,a_{m}\,|\,R\rangle$ 
is already a nilpotent presentation. 
Let $(\tau_{1},\ldots,\tau_{n})$ be the full-form 
sequence for $T$, let  
$\pi_{1},\ldots,\pi_{n}$ be the pivot columns of the 
matrix associated with this sequence, and let $\alpha_{ij}$ 
denote the entries of this matrix.  Since each element 
of $T$ is expressed uniquely in the form 
$\tau_{1}^{\beta_{1}}\cdots \tau_{n}^{\beta_{n}}$ where 
$0\leq \beta_{i} \leq e_{i\pi_{i}}/\alpha_{i\pi_{i}}$ 
for $i=1,\ldots,n$, it follows that the order of $T$ is precisely 
\[
|T| = \prod_{i=1}^{n} e_{i\pi_{i}}/\alpha_{i\pi_{i}}.
\]
Since $n\leq m$ and $e_{i\pi_{i}}\leq L$, we have the bound 
$|T|\leq L^{m}$.

For the bound on the Mal'cev normal form, we 
proceed by induction on the nilpotency class $c$.
If $c=1$, then $G$ is abelian and so the full-form sequence 
for $T$ is precisely $(a_{i})_{i\in\mathcal{T}}$.  Therefore 
the coordinate in position $i$ is bounded by $e_{i}\leq L$.
Now suppose $c>1$ and 
consider the group $G/\Gamma_{c}$ with Mal'cev 
basis $\{a_{1}\Gamma_{c},\ldots,a_{\ell-1}\Gamma_{c}\}$ where $\ell<m$ 
such that 
$\Gamma_{c}=\langle a_{\ell},\ldots,a_{m}\rangle$. 
Reorder, if necessary, the indices $\ell,\ldots,m$ so that 
$\ell,\ell+1,\ldots,s\in \mathcal{T}$ and 
$s+1,\ldots,m\notin \mathcal{T}$.
Let $(t_{1}\Gamma_{c}, \ldots, t_{n}\Gamma_{c})$ be 
the full-form sequence for the torsion subgroup of $G/\Gamma_{c}$.

Consider the group $H=\langle t_{1},\ldots,t_{n},a_{\ell},
\ldots,a_{m}\rangle$ and note that $T\leq H$. 
Denote by $\alpha_{ij}$ the $(i,j)$-entry of the matrix associated with the sequence 
$(t_{1},\ldots,t_{n},a_{\ell},\ldots,a_{m})$, and 
by $\pivot{i}$ the column index of the pivot on row $i$.  Note that $\pivot{i}\in\mathcal{T}$ for $i=1,\ldots,n$ since 
each $t_{i}$ has finite order modulo $\Gamma_{c}$.
Since $(t_{1}\Gamma_{c}, \ldots, t_{n}\Gamma_{c})$ is a full-form
sequence, it follows that
$(t_{1},\ldots,t_{n},a_{\ell},\ldots,a_{m})$ 
is the full-form sequence for $H$, provided each $t_{i}$ is chosen with 
$\alpha_{i\ell}=\ldots=\alpha_{im}=0$.  
Consequently, every element of $H$, and therefore of $T$, 
can be written uniquely in the form 
\[
t^{(\beta)}a^{(\gamma)}=t_{1}^{\beta_{1}} t_{2}^{\beta_{2}}\cdots t_{n}^{\beta_{n}}a_{\ell}^{\gamma_{\ell}}\cdots a_{m}^{\gamma_{m}}
\]
where $\beta=(\beta_{1},\ldots,\beta_{n})$ with 
$0\leq \beta_{i}<e_{\pivot{i}}/\alpha_{i\pivot{i}}$ for $i=1,\ldots,n$ and $\gamma=(\gamma_{\ell},\ldots,\gamma_{m})$ 
with 
$0\leq \gamma_{j} <e_{j}$ for $j=\ell,\ldots,s$ and 
$\gamma_{j}\geq 0$ for $j=s+1,\ldots,m$. Observe 
that each $\beta_{i}$ and each $\gamma_{j}$, for $j=\ell,
\ldots,s$, is bounded by $L$.  We will now 
find a polynomial bound on $\gamma_{j}$ for $j=s+1,\ldots,m$.

Consider any element $t^{(\beta)}a^{(\gamma)}\in T$, 
and let $d$ denote the order of $t^{(\beta)}$ 
modulo $G_{c}$. From the bound $L^{m}$ on the order 
of $T$, we have $d\leq L^{m}$. Write
\[
(t^{(\beta)})^{d} = a_{\ell}^{\gamma_{\ell}'}\cdots 
a_{m}^{\gamma_{m}'}.
\]
Since we may use a constant-depth composition of the 
multiplication/exponentiation polynomials for $G$ 
(see \cite{MMNV2014} Lem. 2.2) 
to compute the coordinates $(\gamma_{\ell}',\ldots,
\gamma_{m}')$, and each input is bounded by a polynomial 
function of $L$, we have a polynomial bound on each 
$\gamma_{j}'$. Now consider 
\[
(t^{(\beta)}a^{(\gamma)})^{d}
    = (t^{(\beta)})^{d} (a^{(\gamma)})^{d} 
    = a_{\ell}^{\gamma_{\ell}'+d\gamma_{\ell}}\cdots 
    a_{m}^{\gamma_{m}'+d\gamma_{m}}.
\]
Since this element lies in $T$, it follows that 
$\gamma_{j}'+d\gamma_{j}=0$ for $j=s+1,\ldots,m$. 
Hence $\gamma_{j}=-\gamma_{j}'/d$ and therefore 
$\gamma_{j}$ is bounded by a polynomial function of $L$. 

We now have a polynomial bound on each 
$\beta_{i}$ and $\gamma_{j}$ 
in $t^{(\beta)}a^{(\gamma)}$.  Again using a 
constant-depth composition of the 
multiplication/exponentiation polynomials, we obtain that each 
of the Mal'cev coordinates of $t^{(\beta)}a^{(\gamma)}$, and 
hence the length of its normal form, 
is bounded by a polynomial 
function of $L$.

Now suppose that $\langle X\,|\,R\rangle$ 
is an arbitrary presentation. By Lemma 
\ref{Lem:WordConversion}, one may 
convert to a nilpotent presentation 
$\langle a_{1},\ldots,a_{m}\,|\,R'\rangle$ 
such that the length of all relators are bounded by polynomial 
function of $L$. In particular, each $e_{i}$ with $i\in \mathcal{T}$ 
(in the new presentation) 
is bounded by a polynomial function of $L$,
and $m$ is a function of $c$ and $r$ (and therefore constant).
This suffices to 
give a polynomial bound on $|T|$ and on the length of 
the Mal'cev normal forms relative to a  
Mal'cev basis $(a_{1},\ldots,a_{m})$.

Finally, 
consider any other Mal'cev basis 
$B=\{b_{1},\ldots,b_{n}\}$ for $G$ and any element 
$g=a_{1}^{\alpha_{1}}\cdots a_{m}^{\alpha_{m}}$ of $T$. 
Recall from Lemma \ref{Lem:WordConversion} that each 
$a_{i}$ is defined as an iterated commutator in generators 
$X$, the maximum length of which is determined by the 
constants $c$ and $r$.  Replacing each $a_{i}$ with 
its definition in generators $X$ 
then each element of $X$ with 
its normal form relative to $B$, 
then applying the multiplication/exponentiation polynomials 
(in basis $B$) we see that the normal form for $g$ has length 
bounded by a polynomial function of $M$.
\end{proof}

The proof above in fact outlines a possible algorithm 
for finding $T$, as one may find for each $\beta$ all 
$\gamma$ such that $t^{(\beta)}a^{(\gamma)}\in T$.  
However, this does not immediately yield a quasilinear-time 
algorithm.  We instead follow an algorithm outlined in 
\cite{KRRRC69}.

} % end comment

\begin{theorem}\label{Thm:Torsion}
Fix positive integers $c$ and $r$.  There is an algorithm that, given a finitely presented nilpotent group 
$G=\langle X\;|\; R\rangle$ 
of nilpotency class at most $c$ with $|X|\leq r$, produces 
\begin{itemize}
\item a generating set for the torsion subgroup $T$ of $G$,
\item a presentation for $T$, and  
\item the order of $T$.  
\end{itemize}
The algorithm runs in space logarithmic in the size $L$ of the given presentation and time quasilinear in $L$, 
or in \tcz{}.  The length 
of each output 
word is bounded by a polynomial function of $L$ and the number 
of such words is bounded by a constant.
If compressed inputs are used, the space complexity 
is $O(nM)$ and the time complexity is $O(nM^{3})$, where 
$n=|R|$ and $M$ bounds the length of each relator in $R$.
\end{theorem}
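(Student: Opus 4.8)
The plan is to induct on the nilpotency class $c$, peeling off the central bottom term $\Gamma_c$ of the lower central series, as outlined in \cite{KRRRC69}. First I would apply \textbf{Lemma \ref{Lem:WordConversion}} to pass to a consistent nilpotent presentation with Mal'cev basis $\{a_1,\ldots,a_m\}$, recording the torsion index set $\mathcal T$ and the suffix $\Gamma_c=\langle a_\ell,\ldots,a_m\rangle$; reorder so that $a_\ell,\ldots,a_s$ are the torsion generators of $\Gamma_c$ and $a_{s+1},\ldots,a_m$ the free ones. In the base case $c=1$ the group is abelian, $T$ is generated by $\{a_i : i\in\mathcal T\}$, its order is $\prod_{i\in\mathcal T}e_i$, and a presentation is read directly from the relators; so assume $c>1$. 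Throughout I would keep all data in Mal'cev coordinates, using that a word of length $L$ has normal form of length polynomial in $L$ (\cite{MMNV2014} Thm.\ 2.3) and that the elementary bound $|T|\le L^{m}$ makes every exponent occurring below fit in $O(\log L)$ bits.

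For the inductive step, let $\overline{\phantom{\phi}}:G\to G/\Gamma_c$ be the projection (computed by erasing the generators of $\Gamma_c$, or via Lemma \ref{Lem:WordConversion}), and recurse on the class-$(c-1)$ group $G/\Gamma_c$ to obtain the full-form sequence, order, and presentation of $\overline T=T(G/\Gamma_c)$. Since $\Gamma_c$ is central and abelian, $T(\Gamma_c)=\langle a_\ell,\ldots,a_s\rangle$ is immediate. The structural backbone is the following: passing to $\widetilde{\phantom{\phi}}:G\to \widetilde G=G/T(\Gamma_c)$ makes the central kernel $\widetilde\Gamma_c=\Gamma_c/T(\Gamma_c)\cong\integers^{f}$ (with $f=m-s$) torsion-free, so $T(\widetilde G)$ injects into $\overline T$, and one checks the central extension $1\to T(\Gamma_c)\to T(G)\to T(\widetilde G)\to 1$. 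Hence a generating set for $T(G)$ is the union of the generators of $T(\Gamma_c)$ with lifts of a generating set of $T(\widetilde G)$, and $|T(G)|=|T(\Gamma_c)|\cdot|T(\widetilde G)|$. This reduces the task to computing $T(\widetilde G)$, which is identified with the subgroup $\overline T_0\le\overline T$ consisting of those $\bar g$ admitting a torsion lift: writing $N=|\overline T|$, every lift $\tilde g$ satisfies $\tilde g^{N}\in\widetilde\Gamma_c\cong\integers^{f}$, and since a central $\tilde z\in\widetilde\Gamma_c$ satisfies $(\tilde g\tilde z)^{N}=\tilde g^{N}\tilde z^{N}=\tilde g^{N}+N\tilde z$, a torsion lift exists exactly when every coordinate of $\tilde g^{N}$ is divisible by $N$, in which case the lift is $\tilde g\,\tilde z$ with $\tilde z=-\tilde g^{N}/N$. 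In particular the residue $\tilde g^{N}\bmod N\in(\integers/N)^{f}$ is independent of the chosen lift, so $\overline T_0=\{\bar g\in\overline T : \tilde g^{N}\equiv 0 \bmod N\}$.

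The main obstacle is extracting $\overline T_0$ and its generators within the required complexity: we cannot enumerate $\overline T$, whose order may be as large as $L^{m}$, and the obvious candidate map $\bar g\mapsto\tilde g^{N}\bmod N$ is \emph{not} a homomorphism because $\overline T$ is a finite nilpotent group that need not be abelian (the Hall–Petrescu correction terms, with binomial coefficients $\binom{N}{i}$, $2\le i\le c$, generally survive in $(\integers/N)^{f}$). I would therefore not treat the power map globally but process the full-form generators $\bar s_1,\ldots,\bar s_k$ of $\overline T$ one at a time — a constant number $k\le m$ of steps — maintaining at each stage a full-form sequence for the torsion subgroup of the partial preimage group, in the Sims-style manner. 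At each step the condition ``$\tilde g^{N}$ lies in $N\widetilde\Gamma_c$'' is a linear/divisibility condition over $\integers^{f}$ that I would solve with the \textbf{Kernel}, \textbf{Membership}, and \textbf{Preimage} subroutines applied to the coordinate map into $\widetilde\Gamma_c$ (exactly the kernel-in-an-abelian-quotient technique used in the base case of Theorem \ref{intersection_of_cosets}), computing $\tilde g^{N}$ by the multiplication/exponentiation polynomials for $G$. Each such call operates on a constant number of $O(\log L)$-bit words, so it runs in space $O(\log L)$ and time $O(\log^{3}L)$ (or in \tcz{}). Assembling the output: the generating set for $T(G)$ is the lifted generators of $\overline T_0$ together with $\{a_\ell,\ldots,a_s\}$; the order is the product $|T(\Gamma_c)|\cdot|\overline T_0|$ with $|T(\Gamma_c)|=\prod_{\ell\le j\le s}e_j$; and a presentation is obtained by feeding this generating set to \textbf{Subgroup Presentation}. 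Since the recursion depth is $c$ and the number of subroutine calls and stored words is constant at each level, the general complexity argument of \S\ref{Sec:FundamentalAlgorithms} applies, giving logspace and quasilinear time (and \tcz{}), with the stated $O(nM)$ space and $O(nM^{3})$ time in the compressed case.
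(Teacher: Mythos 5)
Your structural reduction is mathematically sound: the exact sequence $1\to T(\Gamma_c)\to T(G)\to T(\widetilde G)\to 1$, the injection of $T(\widetilde G)$ into $\overline T=T(G/\Gamma_c)$, and the criterion ``a torsion lift of $\bar g$ exists iff $N$ divides every coordinate of $\tilde g^{\,N}\in\widetilde\Gamma_c\cong\integers^f$'' are all correct. But there is a genuine gap at the decisive algorithmic step: actually computing generators of $\overline T_0$ (equivalently of $T(\widetilde G)$). As you yourself note, the residue map $\bar g\mapsto \tilde g^{\,N}\bmod N$ fails to be a homomorphism once $\overline T$ is nonabelian, so $\overline T_0$ is not the kernel of anything that \textbf{Kernel} or \textbf{Membership} can be applied to, and membership of $\bar s_1^{\alpha_1}\cdots\bar s_k^{\alpha_k}$ in $\overline T_0$ is \emph{not} a linear or divisibility condition in $(\alpha_1,\ldots,\alpha_k)$. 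Your proposed fix --- processing the full-form generators of $\overline T$ ``one at a time, Sims-style, maintaining a full-form sequence for the torsion subgroup of the partial preimage group'' --- is an assertion, not a procedure: no invariant is specified and no method is given for combining a new generator with the previously computed partial torsion subgroup. Moreover, per-generator divisibility checks are provably insufficient even in the abelian case: take $\overline T=\langle\bar a\rangle\times\langle\bar b\rangle\cong(\integers/2)^2$, $\widetilde\Gamma_c=\langle z\rangle\cong\integers$, with the central extension determined by $\tilde a^2=\tilde b^2=z$, $[\tilde a,\tilde b]=1$. Then $(\tilde a z^j)^2=z^{2j+1}\neq 1$ and likewise for $\tilde b$, so no nontrivial power of either full-form generator admits a torsion lift, yet $(\tilde a\tilde b^{-1})^2=1$, so $\overline T_0=\langle\bar a\bar b\rangle$ is nontrivial and invisible to your generator-by-generator test. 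In the abelian case one could rescue this with a \textbf{Kernel} computation against the genuine homomorphism $\overline T\to(\integers/N)^f$; in class $\geq 2$ the Hall--Petrescu correction terms you mention destroy that homomorphism, and you offer no replacement. So the heart of the theorem remains unproven in your approach.

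This difficulty is exactly what the paper's proof is engineered to avoid: it never extracts torsion from a nonabelian quotient. Instead it iterates torsion-of-center, setting $T_1=T(Z(G))$ and $T_i=\phi_i^{-1}\bigl(T(Z(G/T_{i-1}))\bigr)$, where the center is computed as the centralizer of the generating tuple via Theorem \ref{conjugation_of_tuples} and its torsion part is read directly off a full-form sequence of an \emph{abelian} group; termination in at most $c$ steps follows from the inclusion $T_i\supseteq Z_i\cap T$ along the upper central series, and the order of $T$ is read off the pivot entries of its full-form sequence. If you want to keep your lower-central-series route, you would need an actual, provably correct subroutine for the torsion subgroup of a central extension $1\to\integers^f\to\widetilde G\to\overline T\to 1$ with $\overline T$ finite --- for instance by explicitly controlling the polynomial correction terms in the $N$-th power map --- together with a complexity analysis; as written, that step is missing, and it is the entire content of the theorem.
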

\begin{proof}%New vervion. Kargapolov et al. algorithm
Define inductively a sequence $T_{1},T_{2},\ldots $ of finite 
normal subgroups of 
$G$ as follows.  Let $T_{1}=T(Z(G))$, which is clearly 
finite and normal.  For 
$i>1$ define the homomorphism 
$\phi_{i}: G\rightarrow G/T_{i-1}$ and set 
\[
T_{i} = \phi_{i}^{-1}\left(T\left( Z\left(G/T_{i-1}\right)\right)\right).
\]
Since $Z(G/T_{i-1})$ is abelian and finitely generated,
$T(Z(G/T_{i-1}))$ is finite 
and hence finiteness of $T_{i}$ follows from that of $T_{i-1}$.
Normality of $T_{i}$ follows from normality of $T_{i-1}$ 
in $G$ and of $T(Z(G/T_{i-1}))$ in $G/T_{i-1}$.
Since $G$ is Noetherian, the sequence must stabilize 
at some $T_{k}$.  But then $T(Z(G/T_{k}))$ is trivial, 
hence $G/T_{k}$ is torsion-free (its torsion subgroup must 
otherwise intersect its center), hence $T_{k}=T$.

\emph{Algorithm.} We compute the sequence described above. 
Begin by applying \textbf{Lemma \ref{Lem:WordConversion}} 
to compute 
a nilpotent presentation 
$G=\langle A\,|\,S\rangle$.
Since $Z(G)$ is simply the centralizer of any generating set, 
we may find the full-form sequence $(h_{1},\ldots,h_{m})$ for $Z(G)$ using 
\textbf{Theorem \ref{conjugation_of_tuples}} with the 
set $A$. Since $Z(G)$ is abelian, its torsion subgroup 
is generated by the set $X_{1}$ consisting of elements $h_{i}$ such that $i\in \mathcal{T}$.  Note that $\mathcal{T}$ is 
determined by examining the relators of the form (\ref{relator3}) is $S$.

Now assume, by induction, that 
we have a generating set $X_{i}$ for $T_{i}$. 
Use \textbf{Theorem \ref{conjugation_of_tuples}} 
with the nilpotent group 
$G/T_{i}=\langle A\,|\, S\cup X_{i}\rangle$ and the 
set $A$ to find, as described in the base case, the full-form 
sequence $(\tau_{1} T_{i},\ldots,\tau_{m}T_{i})$ for $T(Z(G/T_{i}))$. Then $X_{i}\cup \{\tau_{1},\ldots,\tau_{m}\}$ generates $T_{i+1}$, and we compute the the 
\textbf{Full-form sequence} $X_{i+1}$ of $T_{i}$. If $X_{i+1}=X_{i}$, 
then $T_{i+1}=T_{i}=T$. Otherwise,
we proceed with the next step of the induction.

%\textbf{Lemma \ref{Lem:WordConversion}} with 
%the nilpotent group 
%$G/T_{i}=\langle A\,|\, S\cup X_{i}\rangle$ to produce a 
%lower-central Mal'cev presentation for $G/T_{i}$ and use 
%this presentation to find generators for $T(Z(G/T_{i}))$ 
%by examining the relators, as described in the base case.
%For each generator $t T_{i}$ thus obtained, map 
%$t$ back to a word over $A$ (using the map from Lemma 
%\ref{Lem:WordConversion}) and append to this set of words 
%the generators $X_{i}$, thus forming a generating set 
%of $T_{i+1}$.  Apply \textbf{Full-form sequence} 
%to this set to form $X_{i+1}$. 

Once we obtain the full-form sequence 
$(t_{1},\ldots,t_{n})$ for $T$, it suffices to run 
\textbf{Subgroup Presentation} to give a presentation 
for $T$.  Denote by $\pi_{1},\ldots,\pi_{n}$ the pivot columns 
of the matrix associated with $(t_{1},\ldots,t_{n})$ 
and by $\alpha_{ij}$ the $(i,j)$-entry of this matrix. 
Then every element of $T$ may be expressed 
uniquely in the form $t_{1}^{\beta_{1}}\cdots t_{n}^{\beta_{n}}$ where 
$0\leq \beta_{i}<e_{\pi_{i}}/\alpha_{i\pi_{i}}$, and every 
such expression gives a different element.  Hence 
the order of $T$ is 
\[
|T| = \prod_{i=1}^{n} e_{\pi_{i}}/\alpha_{i\pi_{i}}.
\]

\emph{Complexity.}
First, we will prove that the depth of the 
recursion is bounded by $c$. Let $Z_{i}=\{h\in G\,|\, [h,g]\in Z_{i-1} \;\mbox{for all $g\in G$}\}$
be the $i^{\mathrm{th}}$ term 
of the upper central series of $G$, with 
$Z_{1}=Z(G)$. We claim that 
\[
T_{i}\supseteq Z_{i}\cap T
\]
for all $i=1,\ldots,c$, hence $T_{c}\supseteq G\cap T$ 
so $T_{c}=T$ and the depth of the recursion is bounded by $c$.  
We proceed by induction.  For $i=1$ we have 
$T_{1}=T(Z(G))=Z_{1}\cap T$. Now let 
$g\in Z_{i}\cap T$ and consider $\phi_{i}(g)=\overline{g}$. 
Let $\overline{h}\in G/T_{i-1}$ and consider 
$[\overline{g},\overline{h}]=\overline{[g,h]}$.  Since $g\in T$, we have 
$[g,h]=g^{-1} g^h\in T$ and  since 
$g\in Z_{i}$, we have $[g,h]\in Z_{i-1}$. 
By the inductive assumption, $[g,h]\in T_{i-1}$ 
hence $[\overline{g},\overline{h}]=1$ 
hence $\overline{g}\in Z(G/T_{i-1})$.  Clearly 
$\overline{g}\in T(Z(G/T_{i-1}))$ hence $g\in T_{i}$, proving the 
claim.

Since the depth of the recursion is constant, the total 
number of subroutine calls is constant, as is the number 
of elements kept in memory, since each $X_{i}$ is a 
full-form sequence (hence of bounded length).
\end{proof}

In computing the order of $T$, recall that the numbers 
$e_{i}$ where $i\in \mathcal{T}$, appear as exponents 
in the nilpotent presentation 
computed by Lemma \ref{Lem:WordConversion}.  Consequently, 
each is bounded by a polynomial function of $L$.  Since 
the length $n$ of the full-form sequence for $T$ is bounded 
by a constant, the order of $T$ is polynomially bounded.

\begin{corollary}
If $G=\langle X\,|\, R\rangle$ is a nilpotent group 
of nilpotency class $c$ with $|X|\leq r$, then the 
order of the torsion subgroup of $G$ is bounded 
by a polynomial function of the sum of the lengths of 
the relators $R$.
\end{corollary}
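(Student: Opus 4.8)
The plan is to read off the bound directly from the explicit formula for $|T|$ obtained at the end of the proof of Theorem~\ref{Thm:Torsion}, exploiting the fact that $|T|$ is an invariant of $G$ and may therefore be computed in whichever nilpotent presentation is most convenient. First I would apply Lemma~\ref{Lem:WordConversion} (in the case $Y=X$) to replace the arbitrary presentation $\langle X\mid R\rangle$ by a consistent nilpotent presentation $\langle A\mid S\rangle$ with Mal'cev basis $A=\{a_1,\ldots,a_m\}$. The two features of this conversion that matter are that $m$ depends only on $c$ and $r$, and is hence a constant, and that the length of every relator in $S$ is bounded by a polynomial function of $L:=\sum_{\rho\in R}|\rho|$. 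In particular every torsion exponent $e_i$ (for $i\in\mathcal{T}$), being the integer appearing on the left of a relator of the form (\ref{relator3}), is bounded by a polynomial in $L$.

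Next I would invoke the formula
\[
|T| = \prod_{i=1}^{n} e_{\pi_i}/\alpha_{i\pi_i}
\]
established in the proof of Theorem~\ref{Thm:Torsion}, where $(t_1,\ldots,t_n)$ is the full-form sequence for $T$, the $\pi_i$ are its pivot columns, and the $\alpha_{ij}$ are the entries of the associated matrix. Because this matrix is in row-echelon form with no zero rows, each pivot entry satisfies $\alpha_{i\pi_i}\geq 1$, so each factor is at most $e_{\pi_i}$. The length $n$ of the full-form sequence satisfies $n\leq m$ and is therefore bounded by a constant. Hence $|T|$ is a product of a constant number of factors, each bounded by a polynomial function of $L$, and is itself bounded by a polynomial function of $L$ (of degree at most $m$ times the degree of the polynomial bounding the $e_{\pi_i}$).

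There is essentially no hard step here: all of the substantive work — the polynomial bound on the exponents of the computed nilpotent presentation, the constant bound on $m$, and the closed form for $|T|$ — is already supplied by Lemma~\ref{Lem:WordConversion} and Theorem~\ref{Thm:Torsion}. The only point requiring mild care is that the polynomial bound applies to the torsion exponents of the \emph{computed nilpotent} presentation rather than to the original relators $R$; this is harmless precisely because $|T|$ does not depend on the chosen presentation, so one is free to evaluate it in the nilpotent presentation where the desired bounds are available.
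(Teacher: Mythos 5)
Your proposal is correct and matches the paper's own argument: the paper likewise deduces the bound from the formula $|T|=\prod_{i=1}^{n} e_{\pi_i}/\alpha_{i\pi_i}$ in Theorem~\ref{Thm:Torsion}, the polynomial bound on the exponents $e_i$ in the nilpotent presentation produced by Lemma~\ref{Lem:WordConversion}, and the constant bound on $n$. Your remarks on the pivot entries being at least $1$ and on $|T|$ being presentation-independent are minor elaborations of the same route.
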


\subsection{Isolator}

Recall that the \emph{isolator} of $H$ in $G$ is defined by 
\[
\mathrm{Is}_{G}(H)= \{ g\in G\,|\, g^{n}\in H \;\mbox{for some $n\neq 0$}\}
\]
and, in nilpotent groups, forms a subgroup.

\begin{theorem}
Fix integers $c$ and $r$.
There is an algorithm that, given a finitely presented 
nilpotent group 
$G=\langle X\,|\,R\rangle$ of nilpotency class at 
most $c$ with $|X|\leq r$, and a subgroup $H\leq G$, computes 
\[
\mbox{a generating set for the isolator 
$\mathrm{Is}_{G}(H)$.} 
\]
The algorithm runs in space logarithmic in the length $L$ 
of the input and time quasilinear in $L$, or in 
\tcz{}, and the length 
of each generator is bounded by a polynomial function 
of $L$. 
If compressed inputs are used, the 
space complexity is $O(M)$ and the time complexity $O(nM^{3})$ where $n$ is the number of input words   
and $M$ bounds the encoded size of each input word. 
\end{theorem}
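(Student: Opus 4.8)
The plan is to proceed by induction on the nilpotency class $c$, mirroring the structure of \textbf{Theorem \ref{intersection_of_cosets}} and \textbf{Theorem \ref{Thm:Torsion}}. After converting to a nilpotent presentation with \textbf{Lemma \ref{Lem:WordConversion}}, the base case $c=1$ is handled by a torsion computation: when $G$ is abelian the subgroup $H$ is normal, and an element $g$ lies in $\mathrm{Is}_{G}(H)$ precisely when its image in $G/H$ has finite order. Thus, forming the presentation $G/H=\langle A\mid S\cup Y\rangle$ (where $Y$ generates $H$), I would apply \textbf{Theorem \ref{Thm:Torsion}} to obtain a generating set for the torsion subgroup $T(G/H)$, lift these generators back to $G$, and adjoin the generators of $H$; the result generates $\mathrm{Is}_{G}(H)=\pi^{-1}(T(G/H))$, where $\pi\colon G\to G/H$.

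For the inductive step let $Z=\Gamma_{c}$, which is central, and let $\ov{\phantom{x}}\colon G\to G/Z$ be the quotient map. I would first compute $\ov I=\mathrm{Is}_{G/Z}(\ov H)$ recursively (the class drops to $c-1$), and also $J=\mathrm{Is}_{Z}(H\cap Z)$ in the abelian group $Z$ by the base-case procedure; note that $\mathrm{Is}_{G}(H)\cap Z=J$ and $\mathrm{Is}_{G}(H)\subseteq\pi^{-1}(\ov I)$. The tempting guess $\mathrm{Is}_{G}(H)=\pi^{-1}(\ov I)$ is \emph{false}: a lift of an element of $\ov I$ is only guaranteed to have a power in $HZ$, not in $H$ itself. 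For instance, in the Heisenberg group with $H=\langle a^{2}c\rangle$ one has $\mathrm{Is}_{G}(H)=H$, while $\mathrm{Is}_{G/Z}(\ov H)=\langle\ov a\rangle$ properly contains $\ov H$. The correction is the heart of the argument: for each full-form generator $\ov u$ of $\ov I$, with $\ov u^{\,n}\in\ov H$ for the appropriate $n$, I choose a lift $u\in G$ and use \textbf{Membership} in the subgroup $HZ$ (a genuine subgroup, since $Z$ is central) to write $u^{n}=h\,z$ with $h\in H$ and central defect $z\in Z$. Because $Z$ is central, passing to powers and products turns these defects into an additive (integer-linear) datum in the abelian group $Z$, so the exponent vectors whose adjusted lifts possess a true power in $H$ form the solution set of an integer-linear system over $Z$. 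This solution set, together with the required central adjustments of the lifts, I would extract with the \textbf{Kernel} and \textbf{Membership} algorithms exactly as in the base case and in \textbf{Theorem \ref{intersection_of_cosets}}. Adjoining $J$ to the corrected lifts then yields a generating set for $\mathrm{Is}_{G}(H)$.

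The main obstacle is precisely this twist: proving that the corrected lifts together with $J$ generate the full isolator and nothing more, i.e. that the central-defect condition is \emph{exactly} the membership condition $g^{m}\in H$ and that it is captured by a single integer-linear system over $Z$. The key simplification I would exploit is that, after adjoining $J$, the intersection $H\cap Z$ becomes isolated in $Z$ (one checks $(HJ)\cap Z=J=\mathrm{Is}_{Z}(H\cap Z)$), so that raising to further powers can never move a central defect into $H$ that was not already forced there; this is what makes the defect condition linear rather than merely asymptotic. Once this correctness is established, the complexity is routine: the recursion has depth $c$, each level invokes a constant number of the subroutines of \S\ref{Sec:FundamentalAlgorithms} together with \textbf{Theorem \ref{Thm:Torsion}} on a constant number of $O(\log L)$-bit (respectively $O(M)$-bit) words, and the full-form sequences stored are of bounded length, so the general argument at the start of Section~3 delivers the stated logspace and quasilinear, \tcz{}, and compressed-input bounds.
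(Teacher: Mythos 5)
You have correctly identified the danger spot (a lift of an element of $\ov{I}$ need only have a power in $HZ$, not in $H$), but your repair of it is exactly where the proposal breaks. The claim that ``passing to powers and products turns these defects into an additive (integer-linear) datum'' is false in general, because the defect map is not a homomorphism. For $v=u_{1}^{e_{1}}\cdots u_{r}^{e_{r}}$, collecting $v^{n}$ into the form $h\,z$ with $h\in H$, $z\in Z$ introduces Hall-polynomial terms of degree up to $c$ in the exponents $e_{i}$. Already in your own Heisenberg example this is visible: taking lifts $a,b$, one has $(a^{e_{1}}b^{e_{2}})^{n}=a^{ne_{1}}b^{ne_{2}}c^{\pm\binom{n}{2}e_{1}e_{2}}$, so the central defect is \emph{quadratic} in $(e_{1},e_{2})$, not linear; for $c\geq 3$ matters are worse, since the commutators $[u_{i},u_{j}]$ of the lifts lie in $\Gamma_{2}$ rather than in $Z=\Gamma_{c}$, so the ``defect'' of a product is not even directly an element of $Z$ that one Membership call can test. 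Moreover the exponent $n_{v}$ with $\ov{v}^{\,n_{v}}\in\ov{H}$ varies with $v$, so there is no single system at all. Consequently you cannot invoke \textbf{Kernel} ``exactly as in the base case and in Theorem \ref{intersection_of_cosets}'': in those places the map $\psi$ into $\Gamma_{c}$ was a genuine homomorphism precisely because all of its generator images $c_{i},y_{j}$ already lay in the abelian group $\Gamma_{c}$, and nothing analogous holds for your defect data. Your observation that $(HJ)\cap Z=J$ is correct and does linearize the \emph{asymptotic} condition for a single element (replacing ``some power of the defect lies in $H\cap Z$'' by ``the defect lies in $J$''), but it does nothing about the failure of additivity across products, which is the actual content of the subgroup-generation claim. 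The set of good exponent vectors does describe a subgroup (the preimage of $\pi(\mathrm{Is}_{G}(H))$ in your coordinates), so a pivot-by-pivot descent in the style of the correctness proof of Theorem \ref{intersection_of_cosets} might conceivably be developed, but you have not supplied it, and as stated the step fails.

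For contrast, the paper avoids the defect problem entirely by never working with a non-normal subgroup: it climbs the normalizer tower $N^{0}=H\trianglelefteq N^{1}=N_{G}(H)\trianglelefteq\cdots\trianglelefteq N^{c}=G$ (termination in $c$ steps is \cite{KM79} Thm.\ 16.2.2, with each normalizer computed via Theorem \ref{Thm:ConjugacySubgroups}). Whenever $N\trianglelefteq M$, the isolator $\mathrm{Is}_{M}(N)$ is literally the preimage of the torsion subgroup of the nilpotent quotient $M/N$, computable by Theorem \ref{Thm:Torsion}; the paper then gets $\mathrm{Is}_{N^{i}}(H)$ inductively as the preimage of the torsion subgroup of $\mathrm{Is}_{N^{i}}(N^{i-1})/\mathrm{Is}_{N^{i-1}}(H)$, using that $\mathrm{Is}_{N^{i-1}}(H)$ is normal in $\mathrm{Is}_{N^{i}}(N^{i-1})$. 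Your base case ($H$ normal implies the isolator is the preimage of a torsion subgroup) is exactly the mechanism the paper exploits, but organized along the normalizer tower rather than the lower central series, which is what eliminates the central-correction issue you ran into. Your complexity discussion would be fine once a correct inductive step is in place, since both schemes have constant recursion depth and constant-size stored data.
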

\begin{proof}\emph{Algorithm.} 
First, apply \textbf{Lemma \ref{Lem:WordConversion}} 
to convert to a nilpotent presentation 
$G=\langle a_{1},\ldots,a_{m}\,|\, S\rangle$.

Let $N^{0}=H$ and for $i>0$ define 
$N^{i}=N_{G}(N^{i-1})$, the normalizer of $N^{i-1}$ in $G$. 
It is proved in \cite{KM79} Thm. 16.2.2 that $N^{c}=G$. 
%and we prove this by induction on $c$.  In 
%the base case $c=1$, $G$ is abelian so clearly $N^{1}(H)=G$.  
%For the inductive case, note that 
%$\Gamma_{c}\leq N^{1}(H)$ since $\Gamma_{c}$ is in the center 
%of $G$, and consider $N^{1}(H)/\Gamma_{c}$. 
%Since $G/\Gamma_{c}$ 
%is nilpotent of class $c-1$, we have by induction 
%that $N^{c-1}(N^{1}(H)/\Gamma_{c})=G/\Gamma_{c}$ 
%and it follows that $N^{c}(H)=G$.
Using \textbf{Theorem \ref{Thm:ConjugacySubgroups}}, 
we compute in turn the full-form generating sequences 
for each of 
the subgroups $H,N^{1},\ldots,N^{c}$ and using 
the \textbf{Subgroup Presentation} algorthim we 
compute a nilpotent presentation 
\[
N^{i}=\langle X_{i}\,|\, R_{i}\rangle
\]
for each.
We now proceed, by induction, to compute for each $i=0,\ldots,c$ a generating set $Y_{i}$ for 
$\mathrm{Is}_{N^{i}}(H)$. 
For $i=0$, we have $\mathrm{Is}_{N^{0}}(H)=H$ and we use the 
computed full-form sequence $X_{0}$ for $H$.
Now assume that we have a generating set $Y_{i-1}$ for $\mathrm{Is}_{N^{i-1}}(H)$.  

The subgroup $N^{i-1}$ is normal in $N^{i}$, and we 
will find the torsion subgroup of $N^{i}/N^{i-1}$. 
Using \textbf{Lemma \ref{Lem:WordConversion}}, write 
each element of $X_{i-1}$ in its $X_{i}$-coordinates.  Appending 
these elements to $R_{i}$ we obtain a presentation of 
$N^{i}/N^{i-1}$, which we pass to 
\textbf{Theorem \ref{Thm:Torsion}} to obtain a generating 
set  
$\{\tau_{1} N^{i-1},\ldots,\tau_{m}N^{i-1}\}$ for the torsion subgroup.  Then $\mathrm{Is}_{N^{i}}(N^{i-1})$ is generated 
by $X_{i-1}\cup\{\tau_{1},\ldots,\tau_{r}\}$.  
Converting these elements back to generators of $G$, we 
then compute 
the \textbf{Full-form sequence} $Z_{i}$ for $\mathrm{Is}_{N^{i}}(N^{i-1})$ and, using 
\textbf{Lemma \ref{Lem:WordConversion}} the corresponding 
nilpotent presentation 
\[
\mathrm{Is}_{N^{i}}(N^{i-1})=\langle Z_{i}\,|\, S_{i}\rangle.
\]

We claim that $\mathrm{Is}_{N^{i-1}}(H) \trianglelefteq \mathrm{Is}_{N^{i}}(N^{i-1})$.  Indeed, the property that $g^{n}\in H$ for some $n>0$ 
is unchanged under conjugation, and since $N^{i}$ normalizes $N^{i-1}$ all conjugates remain in $N^{i-1}$.  
Using \textbf{Lemma \ref{Lem:WordConversion}}, write 
each element of $Y_{i-1}$ in terms of the generators $Z_{i}$ 
and append these words to $S_{i}$ to obtain a presentation 
of 
$\mathrm{Is}_{N^{i}}(N^{i-1})/\mathrm{Is}_{N^{i-1}}(H)$. 
Now apply \textbf{Theorem \ref{Thm:Torsion}} to 
compute a generating set  
$\{\sigma_{1} \mathrm{Is}_{N^{i-1}}(H),\ldots,\sigma_{r} \mathrm{Is}_{N^{i-1}}(H)\}$ 
for the torsion subgroup.
%of $\mathrm{Is}_{H_{i}}(H_{i-1})/\mathrm{Is}_{H_{i-1}}(H)$.

Set $Y_{i}=\{\sigma_{1},\ldots,\sigma_{r}\}\cup Y_{i-1}$,
using the two prior calls to Lemma \ref{Lem:WordConversion} 
to write 
each $\sigma_{i}$ in generators of $G$.  We claim 
that $Y_{i}$ 
generates $\mathrm{Is}_{N^{i}}(H)$.  Clearly 
$Y_{i-1}\subset \mathrm{Is}_{N^{i-1}}(H)\subset\mathrm{Is}_{N^{i}}(H)$.  For each $\sigma_{i}$ there exists $p_{i}>0$ such that 
$\sigma_{i}^{p_{i}}\in \mathrm{Is}_{N^{i-1}}(H)$, therefore there exists $q_{i}>0$ such that $\sigma_{i}^{p_{i}q_{i}}\in H$.  
Hence $Y_{i}\subset \mathrm{Is}_{N^{i}}(H)$.
Now if $g\in \mathrm{Is}_{N^{i}}(H)$ then $g^{n}\in H$ for some $n>0$ and $g\in\mathrm{Is}_{H_{i}}(H_{i-1})$, hence 
$g \cdot\mathrm{Is}_{H_{i-1}}(H)$ lies in the torsion subgroup of $\mathrm{Is}_{H_{i}}(H_{i-1})/\mathrm{Is}_{H_{i-1}}(H)$, and so   
$g$ is an element of the subgroup generated by $Y_{i}$. 

Finally, return the \textbf{Full-form sequence} for 
$Y_{n}$.

\emph{Complexity.} Since the number of elements in a 
full-form sequence is bounded by $m$, the total number 
of elements in the sequences for $N^{i}$, $i=0,\ldots,c$, 
is constant, as is the number of relators in each $R_{i}$ 
and the number of elements in the generating set 
for the torsion subgroups.  The depth of the recursion 
is bounded by $c$.  Hence the total number of elements to 
store and the number of subroutine calls is constant.
\end{proof}

\bibliographystyle{alpha}
\bibliography{nilpotent_subgroup}

\end{document}